%
% This is a LaTeX file
%

\documentclass[11pt]{article}
\usepackage{latexsym,color,amsmath,amsthm,amssymb,amscd,amsfonts,mathrsfs,array,xfrac}
\usepackage{tikz}
\usetikzlibrary{arrows,calc}
\tikzset{
%Define standard arrow tip
>=stealth',
%Define style for different line styles
help lines/.style={dashed, thick}, axis/.style={<->}, important
line/.style={thick}, connection/.style={thick, dotted}, }

\setlength{\textwidth}{6.0in} \setlength{\evensidemargin}{0.25in}
\setlength{\oddsidemargin}{0.25in} \setlength{\textheight}{9.0in}
\setlength{\topmargin}{-0.5in} \setlength{\parskip}{2mm}
\setlength{\baselineskip}{1.7\baselineskip}

\def\conv{{\rm Conv}}
\def\RR{\mathbb{R}}
\def\R{\mathbb{R}}
\def\inrad{\mathop\mathrm{inrad}\nolimits}
\newtheorem{lemma}{Lemma}[section]

\newtheorem{remark}[lemma]{Remark}
\newtheorem{theorem}[lemma]{Theorem}
\newtheorem{definition}[lemma]{Definition}

\newtheorem{conj}[lemma]{Conjecture}

\newtheorem*{remark*}{Remark}

\newtheorem*{def*}{Definition}
\newtheorem*{prop*}{Proposition}

\makeatletter \@addtoreset {equation}{section}

\renewcommand\theequation
  {\ifnum \c@subsection>\z@ \arabic{section}.\arabic{subsection}.\arabic{equation}
  \else \arabic{section}.\arabic{equation} \fi}
\makeatother
\begin{document}
\title{From Symplectic Measurements to the Mahler Conjecture} 
\author{ \ Shiri Artstein-Avidan, \  Roman  Karasev,    \ Yaron Ostrover}
\date{}
\maketitle
\begin{abstract}
In this note we link symplectic and convex geometry by relating two 
seemingly different open conjectures: a symplectic isoperimetric-type inequality for
convex domains, and  Mahler's conjecture on the volume
product of centrally symmetric convex bodies. More precisely, we show that if for convex bodies of fixed volume  in the classical phase space the Hofer--Zehnder capacity is maximized by the Euclidean ball, then a hypercube is a minimizer for the volume product  among  centrally symmetric convex bodies.
%the minimum possible volume product among  centrally symmetric convex bodies is attained by a hypercube.
%among centrally symmetric convex bodies the minimum possible volume product is attained by a hypercube.
%Mahler's conjecture for centrally symmetric convex bodies is true. 
\end{abstract}

\section{Introduction and Main Results}

The purpose of this note is to relate a symplectic isoperimetric-type
conjecture for symplectic capacities of convex domains in the classical phase space with the renowned
Mahler conjecture regarding the volume product of symmetric convex bodies.
The main ingredient in the proof, which is of independent interest in the context of symplectic geometry, is the fact that in a centrally symmetric and strictly convex body $K\subseteq \RR^n$
the shortest (measured with respect to $\| \cdot \|_K$) periodic $K^{\circ}$-billiard trajectory  is a $2$-bouncing trajectory. In particular, this implies that for any  centrally symmetric convex body  $K$ the Hofer--Zendher capacity of the configuration $K\times K^{\circ}$ in the classical phase space $\RR^{2n}$ equals $4$.

Before we state our main results more precisely, we first recall some relevant
background and definitions from convex and symplectic geometry.

\subsection{Mahler Conjecture}
Let $(X,\| \cdot \|)$ be an $n$-dimensional normed space and let
$(X^*,\| \cdot \|^*)$  be its dual space. The product space $X
\times X^*$ carries a canonical symplectic structure, given by the
skew-symmetric bilinear form $\omega \bigl ( (x,\xi),(x',\xi') \bigr
) = \xi(x')-\xi'(x)$, and a canonical volume form, the {\it
Liouville} volume, given by $ \omega^n/n!$. A fundamental question
in the field of convex geometry, raised by Mahler in~\cite{Ma}, is to find  upper and lower bounds for the
Liouville volume of $B \times B^{\circ} \subset X \times X^*$, where
$B$ and $B^{\circ}$ are the unit balls of $X$ and $X^*$
respectively. In what follows we shall denote this volume by
$\nu(X)$. 
Note that $\nu(X)$ is an affine invariant of $X$ i.e., it
is invariant under invertible linear transformations. We remark that
in the context of convex geometry $\nu(X)$ is also known as the
{\it Mahler volume} or the {\it volume product} of $X$.

The Blaschke--Santal\'o inequality asserts that the maximum of
$\nu(X)$ is attained if and only if $X$ is a Euclidean space. This
was proved by Blaschke~\cite{Bl} for dimensions two and three, and
generalized by Santal\'o~\cite{Sa} for higher dimensions. 
%The case of equality was later
%characterized by  Saint-Raymond~\cite{SR} (cf.~\cite{MP}).
The following sharp lower bound for $\nu(X)$ was
conjectured by Mahler~\cite{Ma} in 1939:

\noindent {\bf Mahler Conjecture:} \label{Mahler-Conj} For an
$n$-dimensional normed space $X$ one has $\nu(X) \geq 4^n/n!$ 

The conjecture has been verified by Mahler~\cite{Ma} in the
two-dimensional case. In higher dimensions it is proved only in
some very special cases, namely, when the unit ball of $X$ is a
zonoid~\cite{GMR,R1}, when $X$ has a 1-unconditional
basis~\cite{ME1,SR,R2}, and when the unit ball of
$X$ is sufficiently close to the unit cube in the Banach--Mazur
distance~\cite{NPRZ}. % add reference to Weberndorfer; see artem and his talksin banff

The first major breakthrough towards answering Mahler's conjecture was a result 
due to Bourgain and Milman~\cite{BM}, who  used sophisticated tools from functional analysis to
show that the conjecture holds asymptotically, i.e., up to a factor
$\gamma^n$, where $\gamma$ is a universal constant. 
This result has been re-proved later on, by entirely different methods, by Kuperberg~\cite{Ku}, using differential geometry, 
and independently by Nazarov~\cite{Naz},  using the theory of functions of several complex variables. A new proof using simpler asymptotic geometric analysis tools has been recently discovered by Giannopoulos, Paouris, and Vritsiou~\cite{GPV}. 
The best known constant nowadays, $\gamma = \pi/4$, is due to Kuperberg~\cite{Ku}.
%The best known
%constant nowadays $\gamma = \pi/4$ is due to Kuperberg~\cite{Ku}
%(cf. Nazarov~\cite{Naz} for a different approach). 
Despite great efforts to deal with the general case, a proof of Mahler's conjecture has been insistently elusive thus far, and is currently the subject of intensive research efforts.
We remark that in
contrast with the above mentioned Blaschke--Santal\'o inequality, the equality case
in Mahler's conjecture, which is obtained for example for the space
$l^n_{\infty}$ of bounded sequences with the standard maximum norm,
is not unique.

\subsection{Symplectic Capacities} \label{subsection-capacities}

Consider the $2n$-dimensional Euclidean space ${\mathbb R}^{2n}  =
{\mathbb R}^n_q \times {\mathbb R}^n_p$,  equipped with the linear
coordinates $(q_1,\ldots, q_n,p_1,\ldots,p_n)$,  the standard
symplectic structure $\omega_{\rm st} = \sum_i dq_i \wedge dp_i$, and  the
standard inner product $g_{\rm st} = \langle \cdot, \cdot \rangle$.  In what follows we use this inner product to identify 
the tangent space $T_x {\mathbb R}^{2n}$, at a point $x \in {\mathbb R}^{2n}$, with the cotangent space $T^*_x {\mathbb R}^{2n}$ at the same point in the usual way.
Note
that under the identification of ${\mathbb R}^{2n}$ with
${\mathbb C}^n$, these two structures are the real and the imaginary
parts of the standard Hermitian inner product in ${\mathbb  C}^n$,
and $\omega_{\rm st}(v,Jv) =  \langle v, v \rangle$, where $J$ is the
standard complex structure on ${\mathbb R}^{2n} \simeq {\mathbb
C}^n$. Recall that a symplectomorphism of ${\mathbb R}^{2n}$ is a
diffeomorphism which preserves the symplectic structure i.e., $\psi
\in {\rm Diff}({\mathbb R}^{2n})$ such that $\psi^* \omega =
\omega$. 
%In what follows we denote by ${\rm Symp}({\mathbb R}^{2n})$
%the group of all the symplectomorphisms of ${\mathbb R}^{2n}$. 
%, and
%by $B^{2k}(r)$ the open $2k$-dimensional ball of radius $r$.

A  fundamental result in symplectic geometry states that 
symplectic manifolds -- in a sharp contrast to Riemannian manifolds -- have 
no local invariants (except, of course, the dimension). 
The first examples of global symplectic invariants were  introduced by Gromov in his 
seminal paper~\cite{G}, where he used  pseudoholomorphic curves techniques to prove a striking symplectic  rigidity result,  
which is nowadays known as Gromov's ``non-squeezing theorem".  This result paved the way to the introduction of global symplectic 
invariants, called symplectic capacities, which 
roughly speaking measure the symplectic size of sets in ${\mathbb
R}^{2n}$. More precisely, let $B^{2k}(r)$ be the open $2k$-dimensional ball of radius $r$.

\begin{definition} \label{Def-sym-cap}
A symplectic capacity on $({\mathbb R}^{2n},\omega_{\rm st})$ associates
to each  subset $U \subset {\mathbb R}^{2n}$ a number $c(U) \in
[0,\infty]$, such that the following three properties hold:

\noindent (P1) $c(U) \leq c(V)$ for $U \subseteq V$ (monotonicity),

\noindent (P2) $c \big (\psi(U) \big )= |\alpha| \, c(U)$ for  $\psi
\in {\rm Diff} ( {\mathbb R}^{2n} )$ such that $\psi^*\omega_{\rm st} =
\alpha \, \omega_{\rm st}$ (conformality),

\noindent (P3) $c \big (B^{2n}(r) \big ) = c \big (B^2(r) \times
{\mathbb C}^{n-1} \big ) = \pi r^2$ (nontriviality and
normalization).
\end{definition}
Note that the third property disqualifies any volume-related invariant, while the first two imply that for 
$U, V \subset {\mathbb  R}^{2n}$, a necessary condition for the existence of a symplectomorphism $\psi$ 
with $\psi(U) = V$ is that $c(U ) = c(V)$ for any symplectic capacity $c$. 

It is a priori unclear that symplectic capacities exist. 
The above mentioned non-squeezing result naturally leads to the definition of two symplectic 
capacites: the Gromov width, defined by $\underline c(U)=\sup\{\pi r^2 \, | \,  B^{2n}(r) \stackrel{\rm s} \hookrightarrow U \} $; and the 
cylindrical capacity, defined by $\overline c(U) = \inf\{\pi r^2 \, | \, U \stackrel{\rm s} \hookrightarrow Z^{2n}(r) \} $, where $\stackrel{\rm s} \hookrightarrow$ stands for symplectic embedding, and $Z^{2n}(r)=B^2(r) \times
{\mathbb C}^{n-1}$ is the standard symplectic cylinder of radius $r$. 
These two capacities are known to be the largest and the smallest possible symplectic capacities, respectively.

Since Gromov's work, several other symplectic capacities were constructed, such as the Hofer--Zehnder capacity~\cite{HZ},  Ekeland--Hofer capacities~\cite{EH}, the displacement
energy~\cite{H1}, spectral capacities~\cite{FGS,Oh,V2}, and Hutchings' embedded contact
homology capacities~\cite{Hut}.  
Moreover, there has been a great
progress in understanding their properties, interrelations, and
applications to symplectic topology and Hamiltonian dynamics. We note that usually computing these capacities, even for relatively simple sets, is notoriously difficult.
We refer the reader to~\cite{CHLS} for a detailed
survey on the theory of symplectic capacities.

\subsection{A Symplectic Isoperimetric Conjecture}

Let ${\cal K}^{2n}$ be the class of convex domains in ${\mathbb R}^{2n}$.  
The following isoperimetric-type conjecture for symplectic capacities of convex bodies was raised by Viterbo in~\cite{V}.
\begin{conj}(Symplectic isomperimetric conjecture): \label{iso-per-conj}
For any symplectic capacity $c$ and any convex body $\Sigma \in {\cal K}^{2n}$, 
\begin{equation} \label{ineq-vitconj} {\frac {c(\Sigma)} {c(B)}} \leq \Bigl (
{\frac {{\rm Vol}(\Sigma)} {{\rm Vol}(B)}} \Bigr )^{1/n}, \ \ {\rm where}
\ B = B^{2n}(1). \end{equation}
\end{conj}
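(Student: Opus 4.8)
The plan is to collapse the infinite family of inequalities --- one per symplectic capacity --- to a single sharp estimate, and then to attack that estimate through the dynamical (variational) description of capacities on convex domains. First I would note that it suffices to prove \eqref{ineq-vitconj} for the \emph{largest} capacity, the cylindrical capacity $\overline c$: every symplectic capacity satisfies $c \le \overline c$, while the normalization (P3) gives $c(B) = \overline c(B) = \pi$ for $B = B^{2n}(1)$, so any bound of the form $\overline c(\Sigma) \le \pi \big( {\rm Vol}(\Sigma)/{\rm Vol}(B) \big)^{1/n}$ propagates to all $c$. For smooth strictly convex $\Sigma$ I would then trade $\overline c$ for a capacity with an action formula, invoking Viterbo's companion conjecture --- known in many relevant cases --- that all normalized capacities coincide on convex bodies; concretely I would work with the Ekeland--Hofer--Zehnder capacity, for which $c_{\rm EHZ}(\Sigma) = \min_\gamma A(\gamma)$, the minimal action over closed characteristics $\gamma$ on $\partial\Sigma$. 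The ball is the expected extremizer because on $\partial B^{2n}(r)$ every closed characteristic is a great circle of action exactly $\pi r^2$.

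The heart of the argument must then be an isoperimetric comparison between a genuinely dynamical quantity --- the shortest closed characteristic --- and the Liouville volume. One natural route is a symmetrization scheme: deform an arbitrary convex $\Sigma$ toward a ball through convex bodies of constant volume along which $\min_\gamma A(\gamma)$ never increases, forcing the extremal ratio to be attained in the limit at the ball. A second, quantitative route is to bound $\min_\gamma A(\gamma)$ from above using a distinguished affine-symplectic image of $\Sigma$ (a symplectic analogue of the John position), in which the body contains a large Lagrangian product and the action of one explicit short orbit can be matched against ${\rm Vol}(\Sigma)^{1/n}$.

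I expect the main obstacle to be exactly the step from the correct order of magnitude to the sharp constant. The volume is an invariant of the full volume-preserving group, but the only volume-preserving deformations one controls are symplectomorphisms, and these do not realize the Euclidean symmetrizations on which classical isoperimetry rests; there is no known ``symplectic Steiner symmetrization'' that simultaneously fixes the Liouville volume and monotonically decreases the minimal action. This is precisely why every existing argument --- via Bourgain--Milman, John-type positions, or systolic estimates --- recovers \eqref{ineq-vitconj} only up to a universal multiplicative constant. A final sanity check makes the difficulty explicit: on the distinguished configurations $\Sigma = K \times K^{\circ}$ the shortest characteristic is the shortest $K^{\circ}$-billiard trajectory, so that the Hofer--Zehnder capacity equals $4$, while ${\rm Vol}(\Sigma) = {\rm Vol}(K)\,{\rm Vol}(K^{\circ}) = \nu(X)$; substituting into \eqref{ineq-vitconj} and using ${\rm Vol}(B^{2n}(1)) = \pi^n/n!$ collapses the inequality to $4/\pi \le (\nu(X)\,n!/\pi^n)^{1/n}$, i.e. to Mahler's bound $\nu(X) \ge 4^n/n!$. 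Thus on these test bodies the conjecture is no easier than Mahler's --- which is the very relationship the present note is designed to exploit, and the honest reason to expect that no elementary proof of \eqref{ineq-vitconj} is within reach.
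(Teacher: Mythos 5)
The statement you were asked to prove is Conjecture \ref{iso-per-conj} (Viterbo's symplectic isoperimetric conjecture), and it is exactly that: a conjecture. The paper never proves it --- it is an open problem, and the paper's entire contribution runs in the opposite direction: \emph{assuming} the conjecture, Mahler's conjecture follows (Theorem \ref{main-thm}), via the computation $c_{_{\rm HZ}}(K \times K^{\circ}) = 4$ (Theorem \ref{thm-about-capacities}), which is in turn proved through the Minkowski-billiard characterization of the Hofer--Zehnder capacity and the geometric facts of Section \ref{sec:geometric facts}. So there is no paper proof to compare against, and your attempt, as you yourself concede, is not a proof: the ``symmetrization scheme'' is hypothetical (no symplectic analogue of Steiner symmetrization is known that fixes the Liouville volume while monotonically decreasing the minimal action), and the John-position/systolic route is known to yield inequality \eqref{ineq-vitconj} only up to a universal multiplicative constant --- this is precisely Theorem \ref{up-to-uni-constnat}, quoted from \cite{AAMO}, and nothing in your outline upgrades it to the sharp constant. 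That is the concrete gap: both of your candidate strategies terminate before reaching the claimed inequality, and you say so.

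That said, the portions of your proposal that are actual mathematics are correct and consistent with the paper. The reduction to the largest capacity $\overline c$ is valid (every capacity is dominated by $\overline c$, and all are normalized equally on $B$), as is the passage to the minimal action of closed characteristics on convex boundaries. Most importantly, your closing ``sanity check'' --- that on $\Sigma = K \times K^{\circ}$ the conjecture collapses to Mahler's bound $\nu(X) \geq 4^n/n!$ because $c_{_{\rm HZ}}(K \times K^{\circ}) = 4$ --- is not a side remark: it is literally the paper's main theorem, and the equality $c_{_{\rm HZ}}(K \times K^{\circ}) = 4$ is its nontrivial input, established by showing that in a centrally symmetric strictly convex body the shortest periodic $K^{\circ}$-billiard trajectory is a $2$-bouncing orbit of $\| \cdot \|_K$-length $4$ (Theorems \ref{thm-about-strictly-convex-billiards}, \ref{thm-about-Klengths}, and \ref{thm-about-Klengths-withoutconv}). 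In short, what you have written is an accurate account of why the statement is open and at least as hard as Mahler's conjecture, not a proof of it; had you been able to carry out either of your two routes, you would have resolved a major open problem rather than reproduced the paper.
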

% % consider writing the conjectiured equality case; perhaps as a remark

In other words, the symplectic isoperimetric  conjecture states that among the convex domains in ${\mathbb R}^{2n}$ 
with a given volume, the Euclidean ball has the maximal symplectic capacity. 
The %symplectic isoperimetric 
conjecture is known to hold for certain classes of convex  bodies,
including ellipsoids and convex Reinhardt domains (see~\cite{Her}).
Moreover, up to a universal  constant,
Conjecture~\ref{iso-per-conj} %is known to
holds for any symplectic capacity. More precisely, the following
theorem was proved in~\cite{AAMO}.
\begin{theorem} \label{up-to-uni-constnat} There is a universal constant $A_0$, such that for
any $n$, any $\Sigma \in {\cal K}({\mathbb R}^{2n})$, and any symplectic
capacity $c$, 
\begin{equation} \label{AAMO-result} {\frac {c(\Sigma)} {c(B)}} \leq A_0 \, \Bigl (
{\frac {{\rm Vol}(\Sigma)} {{\rm Vol}(B)}} \Bigr )^{1/n}, \ \ {\rm where}
\ B = B^{2n}(1). \end{equation}
\end{theorem}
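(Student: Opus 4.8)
The plan is to reduce the statement to an upper bound on the \emph{cylindrical} capacity $\overline{c}$ and then to estimate the latter by a purely convex-geometric quantity. First, by the normalization (P3) every symplectic capacity satisfies $c(B)=\pi$ for $B=B^{2n}(1)$, and since $\overline{c}$ is the largest symplectic capacity one has $c(\Sigma)\le\overline{c}(\Sigma)$ for every $c$ and every $\Sigma\in{\cal K}({\mathbb R}^{2n})$. Hence it suffices to produce a universal constant $A_0$ with
\begin{equation}
\overline{c}(\Sigma)\ \le\ A_0\,\pi\Bigl(\tfrac{{\rm Vol}(\Sigma)}{{\rm Vol}(B)}\Bigr)^{1/n}.
\end{equation}
The point of this reduction is that it disposes of all the axiomatic flexibility in $c$ at once: only the two soft properties (P1) monotonicity and (P2) conformality will be used from here on.

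Next I would bound $\overline{c}(\Sigma)$ by comparison with a containing ellipsoid, realized concretely as a symplectic embedding into a cylinder. Choose any ellipsoid $\mathcal E\supseteq\Sigma$ and bring it to Williamson normal form by a linear symplectomorphism, with symplectic semi-axes $r_1\le\cdots\le r_n$; geometrically, $\mathcal E$ then sits inside the cylinder $Z^{2n}(r_1)$ over the symplectic plane carrying the smallest axis. Since all symplectic capacities agree on ellipsoids, monotonicity gives $\overline{c}(\Sigma)\le\overline{c}(\mathcal E)=\pi r_1^2$, and because $r_1=\min_i r_i\le(\prod_i r_i)^{1/n}$ while ${\rm Vol}(\mathcal E)={\rm Vol}(B)\prod_i r_i^2$, we obtain
\[
\overline{c}(\Sigma)\ \le\ \pi\,\Bigl(\prod_i r_i\Bigr)^{2/n}\ =\ \pi\Bigl(\tfrac{{\rm Vol}(\mathcal E)}{{\rm Vol}(B)}\Bigr)^{1/n},
\qquad\text{so}\qquad A_0=\Bigl(\tfrac{{\rm Vol}(\mathcal E)}{{\rm Vol}(\Sigma)}\Bigr)^{1/n}.
\]
Thus the whole problem is pushed onto the volume ratio of a well-chosen outer ellipsoid.

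The main obstacle, and the heart of the argument, is to make this ratio dimension-free. The naive choice of the L\"owner (or John) ellipsoid only yields ${\rm Vol}(\mathcal E)\le n^{2n}{\rm Vol}(\Sigma)$, hence the useless bound $A_0\sim n^2$; no inclusion of $\Sigma$ in a Euclidean ellipsoid of comparable volume can do better, since the circumradius is genuinely dimension-dependent. The remedy is to leave the realm of inclusion and invoke asymptotic convex geometry: putting $\Sigma$ into \emph{Milman's $M$-position} furnishes an ellipsoid of the \emph{same} volume as $\Sigma$ with dimension-free covering numbers, and the Bourgain--Milman inequality (the reverse Santal\'o estimate recalled above) supplies the missing dimension-free comparison between the relevant symplectic width — the radius of the smallest disk into which a symplectic image of $\Sigma$ projects — and the volume radius ${\rm Vol}(\Sigma)^{1/2n}$. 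This lets one select a single symplectic $2$-plane onto which $\Sigma$ projects into a disk of radius $\le C\,{\rm Vol}(\Sigma)^{1/2n}$ with $C$ universal, converting the factor $n^2$ into an absolute constant. I expect essentially all the difficulty to be concentrated in this last, dimension-free step; the reduction to $\overline{c}$, Williamson's theorem, and the cylinder comparison are routine.
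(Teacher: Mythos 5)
First, a remark on the ground truth: the paper you were given does not prove Theorem~\ref{up-to-uni-constnat} at all --- it quotes it from~\cite{AAMO} --- so your proposal has to be measured against the proof given there. Your first three steps (reduction to the cylindrical capacity $\overline{c}$, the Williamson normal-form computation for ellipsoids, and the observation that the L\"owner/John ellipsoid cannot beat a polynomial-in-$n$ constant) are correct, but they only reproduce Viterbo's earlier bound~\cite{V} with a constant growing with the dimension; the entire theorem is concentrated in your last step, and there your proposal has a genuine gap. You assert that Milman's $M$-position together with Bourgain--Milman ``supplies the missing dimension-free comparison'', but you give no mechanism, and the mechanism your outline suggests provably fails: if $\Sigma$ is covered by $N(\Sigma,\mathcal{E})\le e^{cn}$ translates of its $M$-ellipsoid, then projecting this covering to any symplectic $2$-plane bounds the shadow area of $\Sigma$ only by $e^{cn}$ times the shadow area of $\mathcal{E}$. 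The covering number enters \emph{linearly}, whereas the inequality to be proved tolerates only a universal constant in front of $\pi\bigl({\rm Vol}(\Sigma)/{\rm Vol}(B)\bigr)^{1/n}$; the same fatal loss occurs in any argument that uses covering numbers, or the Bourgain--Milman constant $\gamma^{2n}$, in a single fixed direction or plane. Exponential-in-dimension factors can be absorbed only when they enter through a \emph{volume ratio}, which the conclusion then raises to the power $1/n$; your sketch never arranges this, and the sentence ``this lets one select a single symplectic $2$-plane onto which $\Sigma$ projects into a disk of radius $\le C\,{\rm Vol}(\Sigma)^{1/2n}$'' is exactly the statement to be proved, not a consequence of quoting Bourgain--Milman.

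What is missing is the symplectic bridge that makes the volumetric tools bite, and this is, in essence, what~\cite{AAMO} provides. After reducing to centrally symmetric $\Sigma$ by Rogers--Shephard (a universal-constant loss), one observes that for any $u,v$ with $\omega_{\rm st}(u,v)=1$ the functionals $\langle\cdot,u\rangle,\langle\cdot,v\rangle$ extend to linear symplectic coordinates $(q_1,p_1,\dots)$, so a linear symplectomorphism takes $\Sigma$ into the split cylinder $\{|q_1|\le h_\Sigma(u)\}\cap\{|p_1|\le h_\Sigma(v)\}$, whose cylindrical capacity is at most $4\,h_\Sigma(u)h_\Sigma(v)$ (an area-preserving planar map sends the open rectangle into a disk of the same area). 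Optimizing over conjugate pairs gives
\begin{equation*}
\overline{c}(\Sigma)\ \le\ \frac{4}{\sup\{\omega_{\rm st}(x,y)\ :\ x,y\in\Sigma^{\circ}\}}.
\end{equation*}
If this supremum were smaller than $t$, then $J\Sigma^{\circ}\subset t\,\Sigma$, hence ${\rm Vol}(\Sigma^{\circ})\le t^{2n}{\rm Vol}(\Sigma)$, and the reverse Santal\'o inequality ${\rm Vol}(\Sigma)\,{\rm Vol}(\Sigma^{\circ})\ge \gamma^{2n}{\rm Vol}(B)^{2}$ of~\cite{BM} (equivalently, the volumetric content of the $M$-ellipsoid) forces $t\ge \gamma\bigl({\rm Vol}(B)/{\rm Vol}(\Sigma)\bigr)^{1/n}$. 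Note that $\gamma^{2n}$ is tamed precisely because it meets a volume ratio. This yields $\overline{c}(\Sigma)\le(4/\gamma)\bigl({\rm Vol}(\Sigma)/{\rm Vol}(B)\bigr)^{1/n}$, and dividing by $c(B)=\pi$ gives the theorem. So you have named the right ingredients --- the $M$-ellipsoid and reverse Santal\'o are indeed what~\cite{AAMO} uses --- but the step that converts them into a bound on a symplectic capacity, without which the covering-number route collapses, is absent from your proposal.
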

This theorem improves a previous result of Viterbo in~\cite{V},
where inequality~$(\ref{AAMO-result})$ was proved up to a factor
depending linearly on the dimension. 
\begin{remark} {\rm 
It is a long standing open question (see, e.g.,~\cite{Her,Ho,V}) whether 
all symplectic capacities coincide on the class of convex domains in ${\mathbb R}^{2n}$.
Note that an affirmative answer to this question would immediately prove Conjecture~\ref{iso-per-conj} above, as it is not hard to check 
that  inequality~$(\ref{ineq-vitconj})$ trivially holds for the Gromov width capacity.
}
\end{remark}

\subsection{Main Results}
Consider the classical phase space ${\mathbb R}^{2n}  =
{\mathbb R}^n_q \times {\mathbb R}^n_p$  equipped with the standard symplectic structure $\omega_{\rm st}$. 
Let ${\cal K}_{s}({\mathbb R}_q^{n})$ be the class of centrally
symmetric convex bodies in ${\mathbb R}_q^{n}$ i.e.,  bounded
convex domains which are symmetric with respect to the origin and
with non-empty interior. For $K \in {\cal K}_s({\mathbb R}_q^n)$, we
define its polar body $K^{\circ} \in {\cal K}_s({\mathbb R}_p^n)$ to
be $$ K^{\circ} = \{p \in {\mathbb R}^n_p \ | \ p(q) \leq 1, \ \
{\rm for \ every \ } q \in K \}.$$ Here we identified ${\mathbb
R}^n_p$ with the dual space $({\mathbb R}^n_q)^*$. Note that if $K$
is considered to be the unit ball of a certain norm $\| \cdot \|$ on
${\mathbb R}_q^n$, then $K^{\circ}$ can be interpreted as the unit
ball of the dual space ${\mathbb R}_p^n \simeq ({\mathbb R}_q^n)^*$
equipped with the dual norm $\| \cdot \|^*$.
In these notations, letting  ${\rm Vol}$ denote the standard
volume in ${\mathbb R}^{2n} = {\mathbb R}^n_q \times {\mathbb
R}^n_p$, the Mahler conjecture reads:

\begin{conj}[Mahler]\label{conj:mahler}
For every $K \in {\cal K}_s({\mathbb R}_q^n)$, one has ${\rm Vol}(K \times K^{\circ}) \geq
{{4^n}/ {n!}}.$
\end{conj}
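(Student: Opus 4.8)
The plan is to derive Mahler's bound as a consequence of the symplectic isoperimetric Conjecture~\ref{iso-per-conj}, applied to the Hofer--Zehnder capacity $c_{\mathrm{HZ}}$ (which is a symplectic capacity, so the conjecture applies to it). The idea is to feed the Lagrangian product $\Sigma = K\times K^{\circ}\subset \RR^n_q\times\RR^n_p$ into inequality~\eqref{ineq-vitconj} and to compute $c_{\mathrm{HZ}}(\Sigma)$ exactly.

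The heart of the matter is the identity $c_{\mathrm{HZ}}(K\times K^{\circ}) = 4$ for every centrally symmetric $K$. First I would invoke the correspondence, for convex domains, between the Hofer--Zehnder capacity and the minimal action of a closed characteristic on the boundary; for the product $K\times K^{\circ}$ this translates the problem into the $K^{\circ}$-billiard dynamics inside $K$, with the action of a closed characteristic equal to the $\|\cdot\|_K$-length of the corresponding periodic $K^{\circ}$-billiard trajectory. The key technical step is then to prove that, when $K$ is centrally symmetric, the shortest periodic $K^{\circ}$-billiard trajectory is a $2$-bouncing one. Using the antipodal symmetry, the extremal $2$-bounce trajectory oscillates between a boundary point $q\in\partial K$ and its antipode $-q$; each of its two segments has $\|\cdot\|_K$-length $\|2q\|_K = 2$, so the total length, and therefore $c_{\mathrm{HZ}}(K\times K^{\circ})$, equals $4$.

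Granting this value, the conclusion is immediate. Substituting $\Sigma = K\times K^{\circ}$ into~\eqref{ineq-vitconj}, and using the normalization $c_{\mathrm{HZ}}(B^{2n}(1)) = \pi$ from (P3) together with ${\rm Vol}(B^{2n}(1)) = \pi^n/n!$, we obtain
\begin{equation*}
\frac{4}{\pi}\;\le\;\left(\frac{{\rm Vol}(K\times K^{\circ})\,n!}{\pi^n}\right)^{1/n}.
\end{equation*}
Raising both sides to the $n$-th power and clearing denominators gives ${\rm Vol}(K\times K^{\circ})\ge 4^n/n!$, which is precisely the Mahler bound.

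The main obstacle is the billiard computation, namely showing that for a centrally symmetric strictly convex $K$ no periodic $K^{\circ}$-billiard trajectory is shorter than the $2$-bouncing one of length $4$. The $2$-bounce trajectory is the obvious candidate, but excluding shorter trajectories with three or more reflection points requires a genuine argument; I expect it to rest on a convexity/reflection inequality that uses the central symmetry of $K$ in an essential way, proved first for strictly convex bodies and then extended to arbitrary centrally symmetric $K$ by approximation. Once the capacity is pinned down to $4$, the passage to Mahler's inequality is the elementary computation above.
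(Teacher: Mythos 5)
Your proposal is correct and follows essentially the same route as the paper: conditionally on Conjecture~\ref{iso-per-conj} applied to $c_{\mathrm{HZ}}$ and $\Sigma = K\times K^{\circ}$, the identity $c_{\mathrm{HZ}}(K\times K^{\circ})=4$ (Theorem~\ref{thm-about-capacities}, proved via the billiard correspondence of Theorem~\ref{Main-Theorem-From-AAO1} and the fact that the shortest periodic $K^{\circ}$-billiard trajectory is a $2$-bouncing orbit) yields the bound ${\rm Vol}(K\times K^{\circ})\ge 4^n/n!$ by exactly the computation you give. The technical core you defer --- ruling out trajectories shorter than $4$ --- is precisely what the paper supplies in Sections~\ref{sec:geometric facts}--\ref{sec-proof-of-main-result}: closedness of the trajectory forces $0$ to lie in the convex hull of the outer normals at the bounce points (Lemma~\ref{lem:0inconvnormals}), and then the purely convex-geometric Theorems~\ref{thm-about-Klengths} and~\ref{thm-about-Klengths-withoutconv}, which exploit central symmetry through antipodal pairs and the triangle inequality, give the lower bound $4$, with the smoothness/strict-convexity reduction handled by Hausdorff continuity of $c_{\mathrm{HZ}}$, just as you anticipated.
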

Note that, by the continuity of volume and the denseness of smooth bodies in ${\cal K}_s({\mathbb R}_q^n)$ (say, with respect to  the Hausdorff metric), it is enough to prove Conjecture~\ref{conj:mahler} for smooth $K$.
We are finally in a position to state our main result:
\begin{theorem} \label{main-thm}
The symplectic isoperimetric conjecture implies the Mahler conjecture.
\end{theorem}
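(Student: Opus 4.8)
The plan is to apply the symplectic isoperimetric conjecture to a single, carefully chosen convex body, namely the product $\Sigma = K \times K^{\circ} \subset \mathbb{R}^{2n} = \mathbb{R}^n_q \times \mathbb{R}^n_p$, taking for $c$ the Hofer--Zehnder capacity $c_{\rm HZ}$. Since $K$ and $K^{\circ}$ are bounded convex bodies with nonempty interior, so is their product $\Sigma$; hence $\Sigma \in \mathcal{K}^{2n}$ and inequality~(\ref{ineq-vitconj}) applies to it with this particular capacity.

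The three ingredients I would feed into that inequality are already on hand. First, by the capacity computation recorded in the introduction, $c_{\rm HZ}(K \times K^{\circ}) = 4$ for every centrally symmetric convex body $K$. Second, the normalization axiom (P3) forces every symplectic capacity to agree with $\pi r^2$ on round balls, so $c_{\rm HZ}(B) = c_{\rm HZ}(B^{2n}(1)) = \pi$. Third, the Euclidean volume of the unit ball in $\mathbb{R}^{2n}$ is ${\rm Vol}(B^{2n}(1)) = \pi^n/n!$.

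Substituting the first two ingredients into~(\ref{ineq-vitconj}) yields $4/\pi \le \bigl( {\rm Vol}(K\times K^{\circ})/{\rm Vol}(B) \bigr)^{1/n}$. Raising both sides to the $n$-th power and multiplying through by ${\rm Vol}(B^{2n}(1)) = \pi^n/n!$ gives
\begin{equation*}
{\rm Vol}(K \times K^{\circ}) \ \ge\ \Bigl(\frac{4}{\pi}\Bigr)^{n} \cdot \frac{\pi^{n}}{n!} \ =\ \frac{4^{n}}{n!},
\end{equation*}
which is exactly the lower bound asserted by the Mahler conjecture. The equality case realized by the cube $K=[-1,1]^n$, whose polar is the cross-polytope, shows the bound is sharp, so a hypercube is indeed a minimizer of the volume product.

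Unlike most implications between open conjectures, the difficulty here does not lie in the deduction, which is a one-line substitution, but in the input it quotes. The genuinely hard fact is the capacity identity $c_{\rm HZ}(K\times K^{\circ}) = 4$, which the introduction traces to the characterization of the shortest periodic $K^{\circ}$-billiard trajectory as a $2$-bouncing chord; the strict-convexity and smoothness hypotheses needed there are absorbed into the proof of that identity, and since the identity is stated for \emph{every} centrally symmetric convex body, the substitution above applies verbatim to all $K \in \mathcal{K}_s(\mathbb{R}_q^n)$. Should one wish to rely only on the strictly convex case, the density of smooth strictly convex bodies together with the continuity of volume noted after Conjecture~\ref{conj:mahler} extends the bound to arbitrary $K$.
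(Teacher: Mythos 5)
Your proof is correct and is essentially identical to the paper's own argument: both apply the isoperimetric inequality~(\ref{ineq-vitconj}) with $c = c_{_{\rm HZ}}$ to $\Sigma = K \times K^{\circ}$, invoke Theorem~\ref{thm-about-capacities} to get $c_{_{\rm HZ}}(K \times K^{\circ}) = 4$, and rearrange using ${\rm Vol}(B^{2n}(1)) = \pi^n/n!$ to obtain ${\rm Vol}(K \times K^{\circ}) \geq 4^n/n!$. Your closing observations (sharpness via the cube, and where the smoothness/strict-convexity hypotheses are absorbed) are accurate but not needed for the deduction, exactly as in the paper.
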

The proof of Theorem~\ref{main-thm} follows immediatly from the following estimate of the symplectic size 
of the configuration $K \times K^{\circ} \subset {\mathbb R}^n_q \times {\mathbb R}^n_p$, for $K \in {\cal K}_s({\mathbb R}_q^{n})$, which is of independent interest for symplectic geometry.
Let $ c_{_{\rm HZ}}$ denote the Hofer--Zehnder capacity, which we shall define in detail in Section \ref{sec-HZ-Mink-billiards}. It is known  (see, e.g., \textsection 3.5 in~{\cite{HZ}, and Section~\ref{sec-HZ-Mink-billiards} below), that on the class of convex domains in ${\mathbb R}^{2n}$ the capacity $c_{_{\rm HZ}}(\Sigma)$
is given by the minimal action of closed characteristics on the boundary $\partial \Sigma$.

\begin{theorem} \label{thm-about-capacities} For every centrally symmetric convex body $K \in {\cal K}_s({\mathbb R}_q^{n})$, 
$$ c_{_{\rm HZ}}(K \times K^{\circ})   =4.$$
\end{theorem}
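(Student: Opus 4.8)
The plan is to reduce the capacity to a Minkowski billiard length and then feed in the two--bounce reduction quoted in the introduction. Since $c_{_{\rm HZ}}$ is continuous on the class of convex domains and polarity is continuous in the Hausdorff metric, it suffices to prove the identity when $K$, and hence $K^\circ$, is smooth and strictly convex: approximating a general $K \in {\cal K}_s(\R^n_q)$ by smooth strictly convex centrally symmetric bodies $K_\varepsilon \to K$ gives $K_\varepsilon^\circ \to K^\circ$ and $c_{_{\rm HZ}}(K_\varepsilon \times K_\varepsilon^\circ) \to c_{_{\rm HZ}}(K \times K^\circ)$. So assume from now on that $K$ is smooth and strictly convex.

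First I would set up the dynamical dictionary on $\partial(K \times K^\circ)$. As recalled above, $c_{_{\rm HZ}}$ of a convex domain is the minimal action of a closed characteristic on its boundary, and on the product these characteristics are exactly the closed $K^\circ$--billiard trajectories in $K$ (as developed in Section~\ref{sec-HZ-Mink-billiards}): on the face $K \times \partial K^\circ$ the point $q$ travels with a fixed momentum $p \in \partial K^\circ$ and velocity dual to $p$, while on the face $\partial K \times K^\circ$ the position freezes and $p$ slides along the normal to $K$, producing the reflection. Writing the orbit as a closed polygon $q_1,\dots,q_m \in \partial K$ with edge--momenta $p_1,\dots,p_m \in \partial K^\circ$, only the free--flight arcs contribute to $\oint p\,dq$, so the action equals $\sum_j \langle p_j,\, q_{j+1}-q_j\rangle$. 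Since $p_j$ is the outward normal realizing the support function of $K^\circ$ in the direction of the edge, and $h_{K^\circ} = \|\cdot\|_K$, this is precisely the $\|\cdot\|_K$--length $\sum_j \|q_{j+1}-q_j\|_K$. Thus $c_{_{\rm HZ}}(K \times K^\circ)$ is the minimal $\|\cdot\|_K$--length of a closed $K^\circ$--billiard trajectory.

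For the upper bound I would exhibit one orbit of length $4$. Fix $q_0 \in \partial K$ and let $\pm p_0 \in \partial K^\circ$ be the dual normals at $\pm q_0$, normalized so that $\langle p_0, q_0\rangle = 1$. The antipodal back--and--forth orbit $q_0 \to -q_0 \to q_0$, carrying momenta $\pm p_0$ on its two edges, is readily checked to be a genuine closed characteristic: the duality of the gradient maps of $\|\cdot\|_K$ and $\|\cdot\|_{K^\circ}$ makes the free--flight directions $\mp 2q_0$ dual to $\pm p_0$, and the momentum jumps $\mp 2p_0$ lie along the normals to $K$ at the two bounce points. Its length is $\|-2q_0\|_K + \|2q_0\|_K = 4\|q_0\|_K = 4$, so $c_{_{\rm HZ}}(K \times K^\circ) \le 4$.

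For the matching lower bound I would invoke the main ingredient stated in the introduction: for smooth strictly convex centrally symmetric $K$ the shortest periodic $K^\circ$--billiard trajectory is a two--bounce trajectory. The reflection law together with the strict convexity and central symmetry of $K$ and $K^\circ$ forces any two--bounce orbit to run between antipodal points (opposite outer normals occur only at antipodal boundary points of a smooth strictly convex symmetric body), and the computation above shows every such orbit has length exactly $4$. Hence no closed trajectory is shorter than $4$, giving $c_{_{\rm HZ}}(K \times K^\circ) \ge 4$ and, with the upper bound, the desired equality. I expect essentially all of the difficulty to sit in this final step: the upper bound and the action/length dictionary are bookkeeping, whereas excluding shorter multi--bounce orbits is the genuine convex--geometric fact. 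Proving it should require a global variational or symmetrization argument exploiting the symmetry of the pair $(K, K^\circ)$, rather than the local reflection law alone.
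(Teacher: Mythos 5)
Your reduction to smooth strictly convex $K$, your use of the billiard--characteristic dictionary (which is Theorem~\ref{Main-Theorem-From-AAO1}, quoted from~\cite{AAO1}), and your upper bound via the $2$-bouncing orbit $[-q_0,q_0]$ all match the paper's proof. The problem is the lower bound, which, as you yourself observe, is where the entire content of the theorem lies. You propose to ``invoke the main ingredient stated in the introduction,'' namely that for strictly convex centrally symmetric $K$ the shortest periodic $K^\circ$-billiard trajectory is a $2$-bouncing one. But that statement is Theorem~\ref{thm-about-strictly-convex-billiards} of this very paper, and the paper proves it \emph{as a consequence of} the proof of Theorem~\ref{thm-about-capacities}: its proof opens by noting that the length bound was ``already showed'' in the proof of Theorem~\ref{thm-about-capacities}. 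So invoking it here is circular --- it is not an available black box --- and your closing suggestion that one could prove it by ``a global variational or symmetrization argument'' is a placeholder rather than an argument. A secondary gap: your action/length dictionary treats only proper (polygonal) trajectories, whereas the trajectories in question may also be gliding ones traveling along $\partial K \times \partial K^\circ$, and any complete lower-bound argument must handle these too.

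The route the paper actually takes to the lower bound is quite different from what you gesture at, and none of it is variational or symmetrization-based. First, Lemma~\ref{lem:0inconvnormals} shows that for any $(K,T)$-billiard trajectory, proper \emph{or gliding}, the origin lies in the convex hull of the outer normals $n_K(q_i)$ at the points where the trajectory meets $\partial K$; this follows from summing the reflection-law identities $\lambda_i n_K(q_i) = p_{i-1}-p_i$ around the closed momentum loop (respectively, integrating $\dot p(t) = \lambda(t)\, n_K(q(t))$ in the gliding case). Carath\'eodory's theorem then extracts at most $n+1$ bounce points retaining this property, and the triangle inequality reduces the problem to the closed polygon through them. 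Finally, a purely convex-geometric statement (Theorem~\ref{thm-about-Klengths-withoutconv}, itself reduced via the simplex-covering degree argument of Lemma~\ref{lemma-about-simplices} to the First Geometric Fact, Theorem~\ref{thm-about-Klengths}) shows that any closed polygon of points of $\partial K$ whose normals' convex hull contains the origin has $\|\cdot\|_K$-length at least $4$. Filling the gap in your proposal requires essentially all of Sections~\ref{sec:geometric facts} and~\ref{sec-proof-of-main-result} of the paper.
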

%perhaps add a remark about c_Z=4 which follows from viterbo but we actually know it anyway without.

In fact, %one can generalize Theorem~\ref{thm-about-capacities} and show that for any two centrally symmetric convex bodies 
Theorem~\ref{thm-about-capacities} can be strengthened to show that for any two centrally symmetric convex bodies 
$K \subset {\mathbb R}^n_q$, and $T  \subset {\mathbb R}^n_p$, one has  
\begin{equation} \label{cap-of-Lag-Prod-Symetric} c_{_{\rm HZ}}(K \times T)   =  \overline c(K \times T) =  4 \inrad_{T^{\circ}}(K), \end{equation} 
where  $\inrad_{T^{\circ}}(K) = \max \{ r> 0 \, | \, rT{^\circ} \subset K \}$. See Remark~\ref{rem-about-K-T}  %after the proof of Theorem~\ref{thm-about-capacities} 
in Section~\ref{sec-proof-of-main-result} for the explanation of this fact.

The proof of Theorem~\ref{thm-about-capacities} is based on the relation established in~\cite{AAO1} between the Hofer--Zehnder capacity of 
certain convex Lagrangian products in the classical phase space, and the minimal length of periodic Minkowski billiard trajectories associated with these products. 
The precise details of this relation will be given in Section~\ref{sec-HZ-Mink-billiards} below. We shall prove
\begin{theorem} \label{thm-about-strictly-convex-billiards}
Let $K\subset \RR_q^n$ be a centrally symmetric strictly convex body. The shortest (with respect to $\| \cdot \|_K$) periodic $K^{\circ}$-billiard trajectory in $K$ is attained exclusively by two-bouncing orbits. In particular, the minimal $\|\cdot \|_K$-length of a periodic $K^{\circ}$-billiard trajectory in $K$ is  $4$. 
\end{theorem}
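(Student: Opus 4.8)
The plan is to reduce the statement to two elementary facts about closed polygons measured in the norm $\|\cdot\|_K$: a lower bound that comes from the reflection law, and a matching upper bound realized by diameters, with strict convexity supplying the rigidity. Throughout I use the description (from the billiard framework of~\cite{AAO1}, to be recalled in Section~\ref{sec-HZ-Mink-billiards}) of a closed $K^{\circ}$-billiard trajectory as a closed polygon with vertices $q_1,\dots,q_m\in\partial K$ and momenta $p_1,\dots,p_m\in\partial K^{\circ}$, whose $\|\cdot\|_K$-length is $\sum_j\|q_{j+1}-q_j\|_K$, and whose reflection rule at the $j$-th vertex reads $p_j-p_{j-1}=-\mu_j\,n_K(q_j)$ with $\mu_j>0$, where $n_K(q_j)$ is the outer normal to $K$ at $q_j$.

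First I would dispose of the two-bounce orbits and the value $4$. A short computation using central symmetry and strict convexity shows that the conditions defining a $2$-periodic orbit force $q_2=-q_1$ and $p_2=-p_1$; conversely, for every boundary point $q_1\in\partial K$ the diameter $q_1\to -q_1\to q_1$ is a genuine $K^{\circ}$-billiard trajectory, of length $2\|2q_1\|_K=4$. This simultaneously exhibits trajectories of length $4$ and shows that every $2$-bounce orbit has length exactly $4$.

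The heart of the matter is the lower bound $\sum_j\|q_{j+1}-q_j\|_K\ge 4$ for an arbitrary closed trajectory. Summing the reflection rule around the loop gives $\sum_j\mu_j\,n_K(q_j)=0$ with all $\mu_j>0$; pairing this identity with any candidate translation vector $v$ shows that $q_j+v\in\operatorname{int}K$ cannot hold for all $j$, i.e. the vertex set cannot be translated into $\operatorname{int}K$, equivalently its $K$-circumradius $R_K$ (the least $r$ with a translate of $rK$ covering it) is at least $1$. On the other hand, every closed loop obeys the normed circumradius estimate $R_K\le\tfrac14\sum_j\|q_{j+1}-q_j\|_K$: choosing two points $p^{*},q^{*}$ that bisect the loop into two arcs of equal length and setting $c=\tfrac12(p^{*}+q^{*})$, the triangle inequality gives $\|x-c\|_K=\tfrac12\|(x-p^{*})+(x-q^{*})\|_K\le\tfrac12(\|x-p^{*}\|_K+\|x-q^{*}\|_K)\le\tfrac14\ell$ for every point $x$ of the loop, since each summand is bounded by the corresponding arc length. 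Combining $1\le R_K\le\tfrac14\ell$ yields $\ell\ge 4$.

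Finally, for the rigidity clause (attained \emph{exclusively} by two-bouncing orbits), suppose a trajectory attains $\ell=4$; then both inequalities are forced to be equalities, so $R_K=1$ and the circumradius bound is tight. Here strict convexity enters decisively: equality in the triangle inequality forces $x-p^{*}$ and $x-q^{*}$ to be positively proportional, while equality between a straight $\|\cdot\|_K$-distance and its arc length forces the arc to be an honest segment. The main obstacle is upgrading these pointwise equalities into the global statement that the polygon collapses onto a line. I would handle it by invoking the uniqueness of the Chebyshev (circum)center in a strictly convex norm to show that all bisecting midpoints coincide with a single center $c$, whence $\gamma(t+2)=2c-\gamma(t)$ and the tightness holds for every bisecting pair; propagating the equality conditions as the pair slides around the loop then forces both halves of the trajectory to be collinear segments through $c$. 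Thus the minimizer lies on a diameter of $K$, i.e. is a $2$-bounce orbit, which (together with the second paragraph) gives the minimal length $4$.
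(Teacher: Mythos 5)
Your proposal is essentially sound, and it follows a genuinely different route from the paper. The paper's proof of this theorem leans on the proof of Theorem~\ref{thm-about-capacities}: Lemma~\ref{lem:0inconvnormals} places the origin in the convex hull of the outer normals at the bounce points, Carath\'eodory's theorem cuts this down to $n+1$ points, Theorem~\ref{thm-about-Klengths-withoutconv} (proved via a simplex-covering lemma that uses a degree argument) converts the condition on normals into the condition $0\in\conv\{x_i\}$ for points outside $\operatorname{int}(K)$, and Theorem~\ref{thm-about-Klengths} then yields the bound $4$ together with the equality analysis. You share only the first step --- summing the reflection rule to get $\sum_j\mu_j n_K(q_j)=0$ --- and replace all the remaining convex-geometric machinery by a circumradius sandwich: the normals identity shows the bounce points cannot be translated into $\operatorname{int}(K)$ (if $q_j+v\in\operatorname{int}(K)$ for all $j$, then $\langle n_K(q_j),v\rangle<0$ for all $j$, contradicting the identity), so their circumradius with respect to $K$ is at least $1$; while the bisecting-pair midpoint estimate shows that any closed loop of $\|\cdot\|_K$-length $\ell$ lies in a translate of $(\ell/4)K$. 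Hence $\ell\ge 4$, and rigidity is extracted from uniqueness of the Chebyshev center in a strictly convex norm. Both halves of the sandwich are correct, and your route is shorter: it avoids Carath\'eodory, avoids the degree-theoretic covering lemma, and never needs the two geometric facts of Section~\ref{sec:geometric facts} --- though those facts are precisely what the paper develops as results of independent interest in convex geometry.

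Two gaps need attention. First, you treat every closed $K^{\circ}$-billiard trajectory as a polygon with finitely many bounce points, but in the framework of~\cite{AAO1} that the paper relies on, a trajectory is either proper or \emph{gliding}, and a gliding trajectory travels along $\partial K\times\partial K^{\circ}$ and is not a polygon. The paper's Lemma~\ref{lem:0inconvnormals} covers that case by integrating $\frac{d}{dt}p(t)=\lambda(t)\,n_K(q(t))$; your argument does extend (the integrated identity again forbids translating the curve into $\operatorname{int}(K)$, and the $\ell/4$ bound holds for any rectifiable closed curve, after which gliding trajectories must be excluded from the equality case since a doubled segment through the origin cannot lie in $\partial K$), but none of this appears in your text. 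Second, the rigidity argument is missing its anchor: the equality chain $1=\|x-c\|_K\le\tfrac12\bigl(\|x-p^*\|_K+\|x-q^*\|_K\bigr)\le \tfrac14\ell$ can only be ``propagated'' at a point $x$ of the trajectory with $\|x-c\|_K$ \emph{exactly} $1$. Such a point exists --- if every bounce point had $\|q_j-c\|_K<1$, the finitely many vertices would lie in $c+rK$ for some $r<1$, contradicting the lower circumradius bound --- but this must be said; only then does sliding the bisecting pair force the two arcs joining that point to its opposite point to be straight, hence coincident, segments. Finally, after the trajectory collapses to a doubled chord $[x,y]$ with $x,y\in\partial K$ and $\|x-y\|_K=2$, you need one more application of central symmetry and strict convexity (equality in $\|x-y\|_K\le\|x\|_K+\|y\|_K$) to get $y=-x$; ``lies on a diameter'' does not come for free. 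With these repairs your proof is complete and constitutes a legitimate alternative to the paper's argument.
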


We now turn to showing that  Theorem~\ref{main-thm} follows from Theorem~\ref{thm-about-capacities}.
\begin{proof}[{\bf Proof of Theorem~\ref{main-thm}}]
Assume that the symplectic isoperimetric conjecture holds. Then, from inequality~$(\ref{ineq-vitconj})$ and Theorem~\ref{thm-about-capacities} it follows that
$$ {\frac {4^n} {\pi^n}} = {\frac {c^n_{_{\rm HZ}}(K \times K^{\circ})} {\pi^n}}
 \leq   {\frac {{\rm Vol}(K \times K^{\circ}) } {{\rm Vol}(B^{2n})} }  =  {\frac {n! \, {\rm Vol}(K \times K^{\circ})} {{\pi^n}}
},
$$
which  is exactly the lower bound for  ${\rm Vol}(K \times
K^{\circ})$ required by Mahler's conjecture.
\end{proof}

\begin{remark} {\rm
 It is clear from the above argument that we do not require the full strength of the symplectic isoperimetric conjecture to deduce Mahler's conjecture. Namely,
%a much weaker version of Conjecture~\ref{iso-per-conj} is already sufficient for the proof of Mahler conjecture. Namely, it is enough to know
%to deduce Mahler's conjecture 
it is enough to know that the symplectic isoperimetric conjecture holds for the Hofer--Zehnder capacity, and for the special class of
convex domains in ${\mathbb R}^{2n}$ of the form $K \times K^{\circ}$, where $K \in {\cal K}_s({\mathbb R}^n_q)$.

}
\end{remark}
\noindent{\bf Structure of the paper:}
In Section~\ref{sec-HZ-Mink-billiards} we explain the relation between the Hofer--Zehnder capacity and the minimum length 
of periodic billiard orbits. In Section~\ref{sec:geometric facts} we provide the main geometric ingredients of the proof
of Theorem~\ref{thm-about-capacities} and Theorem~\ref{thm-about-strictly-convex-billiards}, which are given in Section~\ref{sec-proof-of-main-result}.

\noindent{\bf Acknowledgments:}
We thank the referees for their useful comments. The first named author was partially supported by ISF grant No. 247/11. The second named author was supported by the Dynasty Foundation, the President's of Russian Federation grant MD-352.2012.1, and the Russian government project 11.G34.31.0053. The third named author was partially supported by a Reintegration Grant SSGHD-268274 within the 7th European community framework programme, and by the ISF grant No. 1057/10.

\section{The Hofer-Zehnder Capacity and Minkowski Billiards } \label{sec-HZ-Mink-billiards}

In this section we describe the relation established in~\cite{AAO1} between the Hofer--Zehnder capacity~\cite{HZ}, restricted to the class of convex domains, and the minimal length of periodic 
Minkowski billiard trajectories.  For the reader's convenience,  we recall first some of the relevant definitions and notations. For a detailed exposition and proofs, see~\cite{AAO1}.

The restriction of the symplectic form
$\omega_{\rm st}$ to a smooth closed connected hypersurface $\Sigma
\subset {\mathbb R}^{2n}$ defines a 1-dimensional subbundle
${\rm ker}(\omega_{\rm st}|\Sigma)$ whose integral curves comprise the
characteristic foliation of $\Sigma$. In other words, a {\it closed
characteristic} $\gamma$ on $\partial \Sigma$ is an embedded circle
in  $\partial \Sigma$ tangent to the characteristic line bundle
\begin{equation*} {\mathfrak S}_{\Sigma} = \{(x,\xi) \in T
\partial \Sigma \ | \ \omega_{\rm st}(\xi,\eta) = 0 \ {\rm for \ all} \ \eta \in T_x
\partial \Sigma \}. \end{equation*}
The classical geometric problem of finding a closed characteristic %on $\Sigma$
has a well-known dynamical interpretation: if  the boundary
$\partial \Sigma$ is represented as a regular energy surface $\{x
\in {\mathbb  R}^{2n} \ | \ H(x) = {\rm const} \}$ of a smooth
Hamiltonian function $H : {\mathbb  R}^{2n} \rightarrow {\mathbb
R}$, then the restriction to $\partial \Sigma$ of the Hamiltonian
vector field $X_H$, defined by $i_{X_H} \omega_{\rm st} = -dH$, is a section
of ${\mathfrak S}_{\Sigma}$. Thus, the image of the periodic
solutions of the classical Hamiltonian equation $\dot x= X_H(x) = J\nabla H(x)$ on
$\partial \Sigma$ are precisely the closed characteristics of
$\partial \Sigma$. 
Recall that the action %$A(\gamma)$ 
of a closed curve $\gamma$ % which is the enclosed symplectic area, 
is defined by $A(\gamma) = \int_{\gamma} \lambda$,
where  $\lambda =pdq$ is the Liouville 1-form, whose differential is
$d \lambda = \omega_{\rm st}$. Also, the action spectrum of $\Sigma$ is defined as
\begin{equation*}  {\cal L}(\Sigma) = \left \{ \, | \, {A}({\gamma}) \,  | \, ;
\, \gamma \ {\rm closed \ characteristic \ on} \ \partial \Sigma
\right \}.\end{equation*}

The following theorem, which serves here also as the definition 
of the Hofer--Zehnder capacity for the class of smooth convex bodies, 
can be found, e.g., in~\cite{HZ}.
%is a combination of results from~\cite{EH} and~\cite{HZ}.
\begin{theorem} \label{Cap_on_covex_sets} Let $\Sigma\subseteq {\mathbb
R}^{2n}$ be a convex bounded domain with smooth boundary $\partial
\Sigma$. Then there exists at least one closed characteristic $\gamma^*
\subset \partial \Sigma$ satisfying
\begin{equation*} 
 c_{_{\rm HZ}}(\Sigma)= { A}(\gamma^*) =  \min {\cal
L}( \Sigma). \end{equation*}
\end{theorem}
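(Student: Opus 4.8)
The plan is to prove the stated equality by working from the intrinsic definition of the Hofer--Zehnder capacity and establishing two opposite inequalities, together with attainment of the minimum. Normalizing so that $0\in\mathrm{int}\,\Sigma$, recall that $\mathcal H(\Sigma)$ denotes the class of Hamiltonians $H\in C^\infty(\RR^{2n})$ with $0\le H\le \max H$ such that $H\equiv 0$ on a nonempty open set and $H\equiv \max H$ outside a compact subset of $\mathrm{int}\,\Sigma$; one calls $H$ \emph{admissible} if every nonconstant periodic orbit of $\dot x=X_H(x)$ has period strictly greater than $1$, and then $c_{_{\rm HZ}}(\Sigma)=\sup\{\max H: H\in\mathcal H(\Sigma)\text{ admissible}\}$. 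The computations will rest on an action--period dictionary, which I would set up first. Writing $j=j_\Sigma$ for the gauge function of $\Sigma$ (so $j$ is $1$-homogeneous and $\Sigma=\{j\le 1\}$) and $G=\tfrac12 j^2$, Euler's identity $dG_x(x)=2G(x)$ together with $i_{X_G}\omega_{\rm st}=-dG$ gives $\lambda_0(X_G)=G$ for $\lambda_0=\tfrac12(p\,dq-q\,dp)$; since $\lambda_0$ and $\lambda=p\,dq$ differ by an exact form, integration over a closed characteristic $\gamma\subset\partial\Sigma$ of period $T_G$ yields $A(\gamma)=\int_0^{T_G}G\,dt=\tfrac12 T_G$, i.e. $T_G=2A(\gamma)$.

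For the lower bound I would produce admissible Hamiltonians of the form $H=f\!\left(\tfrac12 j^2\right)$ with $f$ nondecreasing, $f\equiv 0$ near $0$ and $f\equiv m$ near the boundary value $\tfrac12$. Since $H$ depends only on $G$, its flow preserves the level sets $\{G=c\}$ and acts there as the characteristic flow reparametrized by the constant factor $f'(c)$; by the dictionary a periodic orbit lying over a characteristic $\gamma$ has period
\[
T_H=\frac{2A(\gamma)}{f'(c)} .
\]
Choosing $f$ with $f'(c)<2\min\mathcal L(\Sigma)$ everywhere forces $T_H>1$ for all nonconstant orbits, so $H$ is admissible, while $\max H=\int_0^{1/2}f'(c)\,dc$ can be pushed arbitrarily close to $2\min\mathcal L(\Sigma)\cdot\tfrac12=\min\mathcal L(\Sigma)$. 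This gives $c_{_{\rm HZ}}(\Sigma)\ge\min\mathcal L(\Sigma)$. To see that the infimum of $\mathcal L(\Sigma)$ is actually attained by some $\gamma^*$ I would invoke convexity: the minimal closed characteristic is a minimizer of Clarke's dual action functional associated with the convex Hamiltonian $G$, whose minimum exists by the standard compactness of that dual variational problem.

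The reverse inequality $c_{_{\rm HZ}}(\Sigma)\le\min\mathcal L(\Sigma)$ is the crux. Here I would show that any admissible $H$ satisfies $\max H\le\min\mathcal L(\Sigma)$; equivalently, that if $\max H>\min\mathcal L(\Sigma)$ then $X_H$ carries a nonconstant periodic orbit of period at most $1$, contradicting admissibility. I would produce such an orbit by a minimax (linking) argument applied to the symplectic action functional
\[
\Phi_H(x)=\int_{S^1}\tfrac12\langle -J\dot x,x\rangle\,dt-\int_{S^1}H(x)\,dt
\]
on the loop space of $\RR^{2n}$, whose nonconstant critical points are precisely the $1$-periodic orbits of $X_H$. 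When $\max H$ is large the superlevel geometry of $\Phi_H$ links a suitable finite-dimensional sphere, producing a critical point with $\Phi_H>0$; convexity of $\Sigma$ supplies the Palais--Smale compactness and the a priori bounds needed both to run the minimax and to bound the period of the resulting orbit above by $1$. This existence-of-fast-orbits step is the main obstacle and is exactly the hard analytic content behind Hofer--Zehnder's computation. Combining the two inequalities then yields $c_{_{\rm HZ}}(\Sigma)=\min\mathcal L(\Sigma)=A(\gamma^*)$, which is the assertion of the theorem.
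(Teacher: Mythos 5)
You should first note what the ``paper's own proof'' is here: there isn't one. Theorem~\ref{Cap_on_covex_sets} is quoted, not proved --- the authors state explicitly that it ``can be found, e.g., in \S 3.5 of \cite{HZ}'' and that it serves in this paper as the working definition of $c_{_{\rm HZ}}$ on smooth convex bodies. So your proposal can only be measured against the source, and against that benchmark your architecture is the right one: the action--period dictionary $T_G=2|A(\gamma)|$ for $G=\tfrac12 j_\Sigma^2$ is correct (your sign $\lambda_0(X_G)=G$ versus $-G$ depends on the convention $i_{X_G}\omega_{\rm st}=\pm dG$ and is immaterial, since ${\cal L}(\Sigma)$ is defined with absolute values); the lower bound $c_{_{\rm HZ}}(\Sigma)\ge\min{\cal L}(\Sigma)$ via admissible Hamiltonians $H=f(G)$ with $0\le f'<2\min{\cal L}(\Sigma)$, $\max H=\int_0^{1/2}f'$, is complete and correct; and invoking Clarke's dual action principle for attainment of the minimum is indeed how $\gamma^*$ is produced in the convex case.

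The genuine gap is the half you yourself flag as the crux: the inequality $c_{_{\rm HZ}}(\Sigma)\le\min{\cal L}(\Sigma)$, i.e.\ that every admissible $H$ with $\max H>\min{\cal L}(\Sigma)$ carries a nonconstant periodic orbit of period at most $1$. As written, this step is a description of a theorem rather than a proof --- you defer it verbatim to ``the hard analytic content behind Hofer--Zehnder's computation'' --- and two of the supporting assertions are off. First, the linking is not with ``a suitable finite-dimensional sphere'': one works in $E=H^{1/2}(S^1,\RR^{2n})=E^-\oplus E^0\oplus E^+$ and links the sphere $\{x\in E^+:\|x\|_E=\alpha\}$, which is infinite-dimensional, with the boundary of a set of the form $\{x^-+x^0+se^+ : \|x^-+x^0\|_E\le\tau,\ 0\le s\le\tau\}$. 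Second, Palais--Smale compactness is not ``supplied by convexity of $\Sigma$'': for an $H$ that is constant at infinity the functional $\Phi_H$ does not satisfy (PS), and one must first extend $H$ outside $\Sigma$ by a function growing quadratically along rays (proportional to $j_\Sigma^2$, with a carefully chosen slope); compactness comes from this quadratic behavior at infinity. Convexity enters elsewhere: through Clarke duality (existence of the minimizing characteristic), and through the fact that the nonconstant periodic orbits of the homogeneous tail lie over closed characteristics of $\partial\Sigma$, so that knowing $\min{\cal L}(\Sigma)$ lets one choose the slope to create no $1$-periodic orbits in the tail and no spurious critical values in the minimax window. Until that extension-plus-linking analysis is actually carried out (or the relevant proposition of \cite{HZ} is quoted with its hypotheses verified), the bound $\max H\le\min{\cal L}(\Sigma)$ --- and with it the theorem --- is not established by your write-up.
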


We remark that although the above definition of closed characteristics, as well as Theorem~\ref{Cap_on_covex_sets}, 
were given only for the class of convex bodies with smooth boundary, they can naturally be generalised 
to the class of convex sets in ${\mathbb R}^{2n}$ with non-empty interior  (see~\cite{AAO1}).

We now switch gears and turn to mathematical billiards in Minkowski geometry.
The general study of billiard dynamics in Finsler  and Minkowski
geometries was initiated in~\cite{GT}. From the point of view of geometric optics, Minkowski billiard
trajectories describe the propagation of waves in a homogeneous,
anisotropic medium that contains perfectly reflecting mirrors (see~\cite{GT}).
Below, we focus on the special case of Minkowski billiards in a smooth convex
body  $K \subset {\mathbb R}^n$. Roughly speaking, we equip $K$
with a metric given by a certain norm $\| \cdot \|$, and
consider billiards in $K$ with respect to the geometry induced by $\|
\cdot \|$.

More precisely, let $K \subset {\mathbb R}^n_q$, and $T \subset {\mathbb R}^n_p$ be two convex bodies with smooth boundary, and consider the 
unit cotangent bundle
\begin{equation*}
U_T^*K := K \times T = \{ (q,p) \, | \, q \in K, \ {\rm and} \
g_T(p)  \leq 1 \} \subset T^* {\mathbb R}^n_q  = {\mathbb R}^{n}_q
\times  {\mathbb R}^{n}_p. \end{equation*} Here $g_T$ is the gauge function of $T$ i.e., $g_T(x) = \inf \{r \, | \, x \in rT \}$, and in particular when $T$ is centrally symmetric i.e., $T=-T$, one has $g_T(x) = \|x\|_T$.
For $p \in \partial T$, the gradient vector $\nabla g_T(p)$ is the outer normal to $\partial T$ at the point $p$,  and is naturally considered to be in $\mathbb R^n_q$.
% where we have used the standard identification between $T_p{\mathbb R}^n$ and $T^*_p{\mathbb R}^n$ via the usual scalar product.

Motivated by the classical correspondence between closed
geodesics in a Riemannian manifold  and closed
characteristics of its unit cotangent bundle, the following definition of 
$(K,T)$-billiard trajectories, which are essentially closed billiard trajectories in $K$ when the bouncing rule
is determined by the geometry induced from the body $T$, was given in~\cite{AAO1}.

\begin{definition} \label{def-of-periodic-traj}
A closed $(K,T)$-billiard trajectory is the image of a piecewise smooth
map $\gamma \colon S^1 \rightarrow \partial (K \times T) $
%such that % the following:
such that for every  $t \notin {\mathcal B}_{\gamma}:= \{ t
\in S^1 \, | \, \gamma(t) \in \partial K \times \partial T \}$ one has
\begin{equation*}
\dot \gamma(t) = d \, {\mathfrak X}(\gamma(t)),  \end{equation*} for some positive
 constant $d$ and the vector field ${\mathfrak X}$  given by
\begin{equation*}
{\mathfrak X}(q,p) = \left\{
\begin{array}{ll}
(-\nabla g_T(p) ,0), &  (q,p) \in int(K) \times \partial T,\\
(0,\nabla g_K(q)), & (q,p) \in \partial K \times int(T).
\end{array} \right.
\end{equation*}
Moreover, for any $t \in {\mathcal B}_{\gamma}$, the left and right
derivatives of $\gamma(t)$ exists, and
\begin{equation} \label{eq-the-cone}
\dot \gamma^{\pm}(t) \in \{   \alpha (-\nabla g_T(p) ,0) + \beta
(0,\nabla g_K(q))    \ | \ \alpha,\beta \geq 0,  \ (\alpha, \beta) \neq (0,0) \}.
\end{equation}
\end{definition}

\begin{remark}
{\rm Although  in Definition~\ref{def-of-periodic-traj} there is a natural symmetry between the  bodies $K$ and $T$, 
in what follows  $K$ shall play the role of the billiard table, while  $T$ induces the geometry that governs the billiard dynamics in $K$.
It will be useful to introduce the following notation: 
%Let $\pi_q \colon {\mathbb R}^{2n} \rightarrow {\mathbb R}^n_q$ denote the projection to the configuration space. 
for a $(K,T)$-billiard trajectory $\gamma$, the curve $\pi_q(\gamma)$,  where $\pi_q \colon {\mathbb R}^{2n} \rightarrow {\mathbb R}^n_q$ is the natural projection, 
shall be called a $T$-billiard trajectory in $K$.
} \end{remark}

\begin{definition} %[{\bf Trajectories classification}]
A closed $(K,T)$-billiard trajectory $\gamma$ is said to be {\it proper}
if the set ${\mathcal B}_{\gamma}$ is finite, i.e., $\gamma$ is a
broken characteristic that enters, and instantly exits, the product
$\partial K \times \partial T$ at the reflection points.
In the case where ${\mathcal B}_{\gamma} = S^1$, i.e., $\gamma$ is traveling
solely along the product $\partial K \times \partial T$,
 we say that $\gamma$ is a gliding trajectory.
\end{definition}

\begin{figure} %[h1]
\begin{center}
\begin{tikzpicture}[scale=1]

 \draw[important line][rotate=30] (0,0) ellipse (75pt and 40pt);

 \path coordinate (w1) at (2.1,4*0.75) coordinate (q0) at
 (-4.5*0.5,-2*0.2) coordinate (q1) at (1.6,4*0.44) coordinate (q2) at
 (1.74,+0.46) coordinate (w2) at (2.6,-1.1) coordinate (w3) at (-3.2,-0.11) coordinate (K) at
 (0,0.15);

\draw[red] [important line] (q0) -- (q1); \draw[->] (q1) -- (w1);
\draw[red] [important line] (q1) -- (q2); %\draw[->] (q2) -- (w2);
\draw[->] (q0) -- (w3);

\filldraw [black]
  (w3) circle (0pt) node[above] {{\footnotesize $w_2=\nabla \|q_2\|_K$}}
    (w1) circle (0pt) node[right] {{\footnotesize $w_1=\nabla \|q_1\|_K$}}
    (w2) circle (0pt) % node[below ] {{\footnotesize $w_2=\nabla \|q_2\|_K$}}
     (q0) circle (2pt) node[below right] {{\footnotesize $q_2$}}
      (q1) circle (2pt) node[above right=0.5pt] {{\footnotesize $q_1$}}
       (q2) circle (2pt) node[right=0.5pt] {{\footnotesize $q_0$}}
        (K) circle (0pt) node[right=0.5pt] {$K$};

 %      % We start the second graph
       \begin{scope}[xshift=7cm]

 \draw[important line][rounded corners=10pt][rotate=10] (1.8,0) --
 (0.8,1.8)-- (-0.8,1.8)--  (-1.8,0)--  (-0.8,-1.8) -- (0.8,-1.8) --
 cycle;

 \path coordinate (p1) at (0.5,-4*0.433) coordinate (np0) at
 (2.3,2*0.9) coordinate (p0) at (1.2,2*0.53) coordinate (np1) at
 (0.7,-3) coordinate (p2) at (-1.2,2*0.66) coordinate (D) at
 (0.3,0.22);

 \draw[<-] (np0) node[right] {{\footnotesize $v_1=\nabla \|p_1
 \|_T$}} -- (p0);
 \draw[<-] (np1) node[right] {{\footnotesize $v_0=\nabla \|p_0
 \|_T$}} -- (p1);

 \draw[blue][important line] (p0) -- (p1);
 \draw[blue][important  line] (p0) -- (p2);

  \filldraw [black]
       (p1) circle (2pt) node[below right] {{\footnotesize $p_0$}}
         (p2) circle (2pt) node[left] {{\footnotesize $p_2$}}
         (p0) circle (2pt) node[above=2pt] {{\footnotesize $p_1$}}
          (D) circle (0pt) node[left] {$T$};
 \end{scope}
 \end{tikzpicture}

 \caption{A proper $(K,T)$-Billiard trajectory.} 
 \end{center}
 \end{figure}
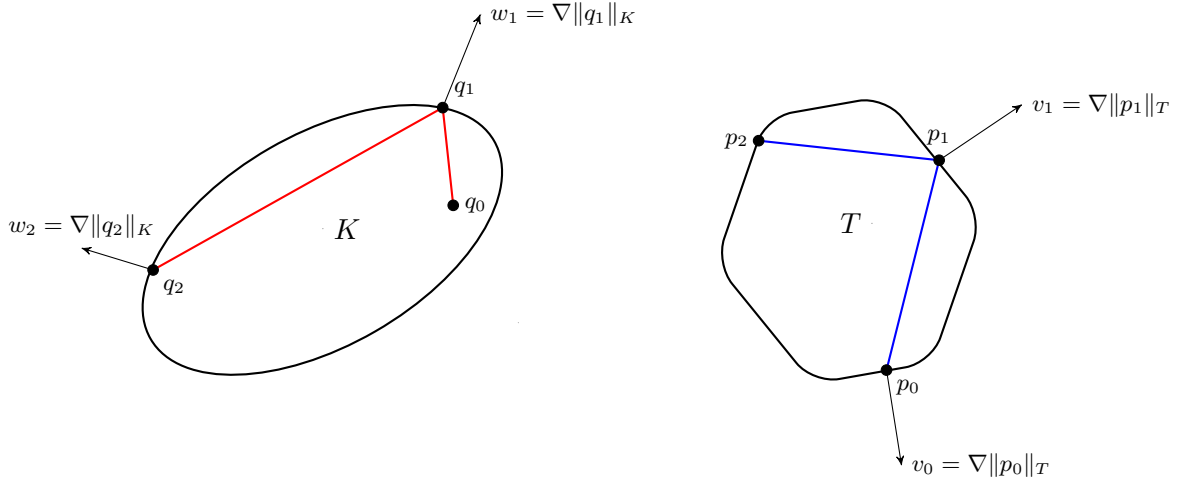

For a proper billiard trajectory, when we follow the flow of the vector field
${\mathfrak X}$, we move in $K \times \partial T$ from $(q_0,p_0)$ to
$(q_1,p_0)$ following the opposite of the outer normal to
$\partial T$ at $p_0$. When we hit the boundary $\partial K$ at the
point $q_1$, the vector field  changes, and we start to move in
$\partial K \times T$ from $(q_1,p_0)$ to $(q_1,p_1)$ following the outer
 normal to $\partial K$ at the point $q_1$.  Next, we move from
$(q_1,p_1)$ to $(q_2,p_1)$ following the opposite of the normal to
$\partial T$ at $p_1$, and so forth  (see
Figure $1$). Note that this reflection law is a
natural variation of the classical one (i.e., equal impact and
reflection angles) when the Euclidean structure on ${\mathbb R}^n_q$
is replaced by the metric induced by the norm $\| \cdot \|_{T}$.
Moreover, it is not hard to check that when $T$ is the Euclidean
unit ball, the bouncing rule described above is the
classical one. Also, similarly to the Euclidean case, one can check that  $(K,T)$-billiards 
correspond to critical points of   the length functional given by  the support function $h_T$, where $h_T(u) = \sup \{ \langle x,u \rangle \, ; \, x \in T \}$. 
%
%the reflection law is such that 
%they are critical points for the length functional given by  the support function $h_T$, where $h_T(u) = \sup \{ \langle x,u \rangle \, ; \, x \in T \}$. 

We remark that in~\cite{AAO1} is was proved that 
every $(K,T)$-billiard trajectory is either a proper trajectory or a gliding one, and that the following holds:

\begin{theorem}[\cite{AAO1}] \label{Main-Theorem-From-AAO1}
Let $K \in {\cal K}({\mathbb R}_q^n)$ and $T \in {\cal K}({\mathbb R}_p^n)$ be
two smooth strictly convex bodies. Then the Hofer--Zehnder capacity $c_{_{\rm HZ}}(K \times T)$ equals the length, with respect to the support function $h_T$, of the shortest periodic $T$-billiard trajectory in $K$. 
\end{theorem}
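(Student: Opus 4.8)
The plan is to combine Theorem~\ref{Cap_on_covex_sets}, which identifies $c_{_{\rm HZ}}(\Sigma)$ with the minimal element $\min{\cal L}(\Sigma)$ of the action spectrum of closed characteristics on $\partial\Sigma$, with an explicit description of the characteristic foliation of the boundary of $\Sigma=K\times T$. The essential difficulty is that, even though $K$ and $T$ are smooth, the product $K\times T$ is only piecewise smooth: its boundary decomposes into the two smooth faces $\partial K\times T$ and $K\times\partial T$, joined along the ridge $\partial K\times\partial T$. Thus I would first invoke the extension of Theorem~\ref{Cap_on_covex_sets} to convex bodies with non-smooth boundary (noted after that theorem, cf.~\cite{AAO1}), in which closed characteristics are understood as generalized characteristics whose admissible velocities at a boundary point $x$ are obtained by applying $J$ to the normal cone of $\Sigma$ at $x$. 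The remainder of the argument is then a local computation on the three strata.

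Next I would compute the characteristic directions face by face. On $K\times\partial T$ the face is the regular level set $\{g_T(p)=1\}$ with outer normal $(0,\nabla g_T(p))$, so the characteristic direction is $J(0,\nabla g_T(p))=(-\nabla g_T(p),0)$; hence $p$ stays frozen while $q$ moves along a straight segment, which is exactly the free-flight phase governed by ${\mathfrak X}$ in Definition~\ref{def-of-periodic-traj}. Symmetrically, on $\partial K\times T$ one gets $J(\nabla g_K(q),0)=(0,\nabla g_K(q))$, so $q$ is frozen on $\partial K$ while $p$ evolves in the direction $\nabla g_K(q)$, the reflection phase. At a ridge point $(q,p)\in\partial K\times\partial T$ the normal cone of $K\times T$ is spanned by $(\nabla g_K(q),0)$ and $(0,\nabla g_T(p))$, and applying $J$ produces precisely the cone of admissible velocities $\{\alpha(-\nabla g_T(p),0)+\beta(0,\nabla g_K(q))\,|\,\alpha,\beta\ge0\}$ appearing in $(\ref{eq-the-cone})$. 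This matches generalized closed characteristics with $(K,T)$-billiard trajectories, and the proper-or-gliding dichotomy of~\cite{AAO1} shows these are the only possibilities.

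It then remains to identify the action $A(\gamma)=\int_\gamma p\,dq$ with the $h_T$-length of $\pi_q(\gamma)$. On every arc contained in $\partial K\times T$ the position $q$ is constant, so $dq=0$ and the contribution vanishes; the action comes entirely from the free segments in $K\times\partial T$. On such a segment $p$ equals a fixed $p_j\in\partial T$ and $q$ sweeps a displacement $u_j$ parallel to $\nabla g_T(p_j)$, contributing $\langle p_j,u_j\rangle$. Since $\nabla g_T(p_j)$ is the outer normal to $\partial T$ at $p_j$, the support function in that direction is attained at $p_j$, so $h_T(\nabla g_T(p_j))=\langle p_j,\nabla g_T(p_j)\rangle$, and by Euler's identity for the $1$-homogeneous gauge this equals $g_T(p_j)=1$. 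Choosing the orientation of the characteristic consistently (which is harmless because ${\cal L}(\Sigma)$ records only $|A(\gamma)|$) makes each contribution positive and equal to the $h_T$-length $h_T(\pm u_j)$ of the corresponding edge. Summing over the segments yields $|A(\gamma)|=\sum_j h_T(\pm u_j)$, the $h_T$-length of $\pi_q(\gamma)$, and minimizing over all closed characteristics gives $c_{_{\rm HZ}}(K\times T)$ as the shortest such billiard length.

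The main obstacle I anticipate is not these local computations but the rigorous non-smooth setup: making precise the notion of generalized closed characteristic on the ridged boundary $\partial(K\times T)$, and verifying that the minimizer realizing the extended capacity of Theorem~\ref{Cap_on_covex_sets} can indeed be taken to be a genuine proper or gliding $(K,T)$-billiard trajectory rather than some more degenerate object concentrated on the ridge. A secondary but necessary point is the orientation bookkeeping that turns the signed action into the manifestly nonnegative $h_T$-length; here I would lean on the fact that $c_{_{\rm HZ}}$ depends only on $|A(\gamma)|$, together with the homogeneity identity $\langle p,\nabla g_T(p)\rangle=g_T(p)=1$ on $\partial T$, to eliminate the sign ambiguity introduced by the two possible orientations of ${\mathfrak X}$.
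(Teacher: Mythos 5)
You should note at the outset that the paper itself contains \emph{no proof} of this statement: it is quoted from \cite{AAO1} (together with the proper-or-gliding dichotomy stated just before it), so there is no internal argument to compare against. That said, your outline is a faithful reconstruction of the strategy of \cite{AAO1}: extend Theorem~\ref{Cap_on_covex_sets} to convex bodies with non-smooth boundary via generalized characteristics whose admissible velocities are $J$ applied to the normal cone; verify stratum by stratum that these coincide with the $(K,T)$-billiard trajectories of Definition~\ref{def-of-periodic-traj}, with the ridge $\partial K\times\partial T$ producing exactly the cone condition $(\ref{eq-the-cone})$; and identify $|A(\gamma)|$ with the $h_T$-length of $\pi_q(\gamma)$ via $h_T\bigl(\nabla g_T(p)\bigr)=\langle p,\nabla g_T(p)\rangle=g_T(p)=1$ on $\partial T$. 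These local computations, including the orientation bookkeeping (the flow direction $-\nabla g_T(p)$ versus the sign of $\int_\gamma p\,dq$, resolved because ${\cal L}(\Sigma)$ records $|A|$), are all correct.

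Two caveats on completeness. First, you correctly flag but do not fill what is in fact the analytic core of \cite{AAO1}: making the Hofer--Zehnder (equivalently, minimal-action) characterization rigorous for the ridged body $K\times T$, and showing the minimizing generalized characteristic is a genuine proper or gliding trajectory rather than a degenerate object concentrated on $\partial K\times\partial T$. Invoking the dichotomy of \cite{AAO1} at that point makes your argument a proof modulo precisely the content of the cited paper --- which is consistent with how the present paper treats the theorem, but should be acknowledged as the part carrying the real weight. Second, your action-equals-length computation covers only proper trajectories, where the action lives on straight free segments; for a gliding trajectory both $q$ and $p$ move simultaneously along the ridge, and one needs the pointwise version of the same identity: writing $\dot q(t)=-\alpha(t)\nabla g_T(p(t))$ per $(\ref{eq-the-cone})$, one has $\langle p(t),\dot q(t)\rangle=-\alpha(t)$ while $h_T\bigl(-\dot q(t)\bigr)=\alpha(t)$, and integrating over $S^1$ gives the claim. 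This extension is routine but must be stated, since gliding trajectories are one of the two cases the dichotomy allows.
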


\section{Two Geometric Facts}\label{sec:geometric facts}        %{sec:convhull}

Here we provide the geometric ingredients needed for the proof of Theorem~\ref{thm-about-capacities},
which may be of independent interest in the realm of convex geometry.
% ConsideR:roughly speaking, the first one states that... te second one...
Throughout this section, we assume that $K \subset {\mathbb R}^n$ is a centrally symmetric convex body. Moreover, 
it will be convenient to use the following notation. For a set of points $X$, we denote by ${\rm Conv}(X)$ the convex hull of $X$; and
for a closed oriented polygonal path ${\cal P} \subset {\mathbb R}^{n}$, specified by vertices $x_1, \ldots, x_{m}\in \RR^n$,  we set 
\[{\rm {\rm Length}_K({\cal P}):=
Length}_K(x_1 x_2 \cdots x_{m}):=  \|x_1 - x_{m} \|_K + \sum_{i=1}^{m-1} \| x_{i+1}-x_i
\|_K.\]

\subsection{First Geometric Fact}\label{sec:0inconv}

\begin{theorem}\label{thm-about-Klengths}
Let $K\subset \RR^n$ be a centrally symmetric convex body, and consider points
%Let 
$\{x_1, \ldots, x_{m}\}\subset \RR^n\setminus {\rm int}(K)$ with $m\ge 2$, $x_i\neq x_{i+1}$ for $i=1,\ldots, m-1$ and $x_1\neq x_m$, such that $0 \in {\rm Conv}\{x_i\}_{i=1}^{m}$. 
Then, $$
{\rm Length}_K(x_1 x_2 \cdots x_{m}) \geq 4.$$
%Then, $$ \|x_1 - x_{m+1} \|_K + \sum_{i=1}^m \| x_{i+1}-x_i
%\|_K \geq 4.$$
Moreover, this minimium is attained for $m=2$ and $x_1=-x_2\in \partial K$, and when the body $K$ is strictly convex, this is the only equality case. 
\end{theorem}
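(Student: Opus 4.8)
The plan is to dispatch the base case $m=2$ first, then reduce a general configuration to a minimal (simplicial) one, and finally treat that minimal case, where the real difficulty lies. If $m=2$, then $0\in\conv\{x_1,x_2\}=[x_1,x_2]$ forces $x_1$ and $x_2$ onto opposite rays from the origin, where the norm is additive: $\|x_1-x_2\|_K=\|x_1\|_K+\|x_2\|_K$. Hence $\mathrm{Length}_K(x_1x_2)=2\|x_1-x_2\|_K=2(\|x_1\|_K+\|x_2\|_K)\ge 4$ using $\|x_i\|_K\ge 1$, with equality exactly when $\|x_1\|_K=\|x_2\|_K=1$ and $x_1=-x_2$, the asserted minimizer.

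For $m\ge 3$ I would argue that it suffices to treat configurations in which no vertex is redundant for containing the origin. Indeed, if $0\in\conv(\{x_i\}\setminus\{x_j\})$ for some $j$, I delete $x_j$: by the triangle inequality $\|x_{j+1}-x_{j-1}\|_K\le\|x_{j+1}-x_j\|_K+\|x_j-x_{j-1}\|_K$ the length does not increase, the surviving points remain outside $\mathrm{int}(K)$, and the origin stays in their convex hull. Iterating, I reach an \emph{irreducible} configuration in which the (relabelled) points are affinely independent, $0$ lies in the relative interior of the simplex $\Delta=\conv\{x_i\}$, and the barycentric representation $0=\sum_i\lambda_i x_i$ with $\lambda_i>0$, $\sum_i\lambda_i=1$, is unique. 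Since each deletion strictly shortens the path unless $x_j$ lies on $[x_{j-1},x_{j+1}]$, tracking these steps recovers the equality and uniqueness assertion for strictly convex $K$.

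In the irreducible case I would exploit the facets of $\Delta$. For each $k$ let $n_k\in\partial K^\circ$ be the support functional of $K$ along the outer normal of the facet $F_k=\conv\{x_j:j\ne k\}$ opposite $x_k$, normalized so that $\langle n_k,x_j\rangle=c_k$ for all $j\ne k$; interiority of $0$ gives $c_k>0$. Applying $n_k$ to $0=\sum_i\lambda_i x_i$ yields $\langle n_k,x_k\rangle=c_k-d_k$, where $d_k:=c_k/\lambda_k$. Since $n_k,-n_{k+1}\in K^\circ$ (here the central symmetry of $K$, hence of $K^\circ$, is used), every edge satisfies
\[\|x_{k+1}-x_k\|_K\ge\max\{\langle n_k,x_{k+1}-x_k\rangle,\ \langle -n_{k+1},x_{k+1}-x_k\rangle\}=\max\{d_k,d_{k+1}\},\]
so that $\mathrm{Length}_K(x_1\cdots x_m)\ge\sum_k\max\{d_k,d_{k+1}\}$. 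The main obstacle is the final inequality $\sum_k\max\{d_k,d_{k+1}\}\ge 4$. The $d_k$ are not individually bounded below by $2$ (for the regular simplex $d_k=m/(m-1)<2$, yet the $m$ terms sum to $m^2/(m-1)\ge4$, while for a needle-like simplex a single $d_k\to 2$ feeds two of the maxima), so closing the estimate requires feeding in the normalization $\|x_i\|_K\ge1$ — which pins the scale of the $c_k$ and prevents all facets from collapsing onto the origin — and marrying it with the cyclic max-structure; I expect this coupling of the convex constraint with the combinatorics of the cyclic sum to be the crux, with the extremal value $4$ occurring only in the degenerate two-point limit, matching the equality case from the reduction. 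As a fallback for the equality and uniqueness part, I would instead run a compactness argument: after normalization the constraint set is compact, a minimizer exists, and any minimizer with $m\ge 3$, or with some $\|x_i\|_K>1$, can be shortened by an explicit deformation preserving $0\in\conv\{x_i\}$, forcing the minimizer to be the antipodal two-point configuration.
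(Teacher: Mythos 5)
Your preparatory steps are sound: the $m=2$ case is correct, the reduction to an ``irreducible'' configuration (affinely independent points with $0$ in the relative interior of their convex hull) is a legitimate Carath\'eodory-type argument, and the dual estimate $\|x_{k+1}-x_k\|_K\ge\max\{d_k,d_{k+1}\}$, with $d_k=c_k/\lambda_k$ the $K$-height of the simplex at the vertex $x_k$, is correctly derived (central symmetry of $K^\circ$ enters exactly where you say). But the proof has a genuine gap at precisely the point you flag yourself: the closing inequality $\sum_k\max\{d_k,d_{k+1}\}\ge 4$ is never proved, only conjectured. This is not a routine missing detail; it is the entire content of the theorem, restated in dual coordinates. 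The numbers $d_k$ by themselves cannot imply anything (they scale linearly when the configuration is shrunk toward the origin, so the inequality is false without the constraint $\|x_i\|_K\ge 1$), and you give no mechanism for coupling that constraint with the cyclic max-structure. Worse, the inequality you would need has no slack: for $K$ the Euclidean disc and the needle $x_1=(1,0)$, $x_{2,3}=(-1,\pm\epsilon)$ one gets $d_1=2$, $d_2=d_3=O(\epsilon)$, so $\sum_k\max\{d_k,d_{k+1}\}=4+O(\epsilon)$ while $\sum_k d_k\approx 2$; any proof must therefore be lossless, and nothing in your setup supplies such a sharp mechanism. The fallback has the same status: the assertion that any minimizer with $m\ge 3$, or with some $\|x_i\|_K>1$, ``can be shortened by an explicit deformation preserving $0\in\conv\{x_i\}$'' is again the whole difficulty, and no deformation is exhibited. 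Consequently the equality/uniqueness discussion, which you make contingent on tracking these unproven steps, is also not established.

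For contrast, the paper closes exactly this gap by a mechanism that avoids facet/height bookkeeping altogether. It splits the cyclic sequence into two arcs whose convex hulls contain a pair of antipodal points $p,-p$ (Lemma~\ref{lem:conv-intersect-minusconv}, a pigeonhole argument on the weights in $0=\sum_i\eta_ix_i$); it shortcuts each arc through $p$, respectively $-p$, without increasing $\|\cdot\|_K$-length (Lemma~\ref{Lemma-about-short-cuts}); and it bounds the resulting quadrilateral $(x_{i_0},p,x_{j_0},-p)$ below by $4$ via a doubling trick (Lemma~\ref{lem:length-of-path-between-xpy-p}): each half of the quadrilateral between $p$ and $-p$, concatenated with its reflection through the origin, becomes a closed path through the antipodal pair $\pm x_{i_0}$ (or $\pm x_{j_0}$) lying outside $K$, so the triangle inequality alone gives each half length at least $2$. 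The antipodal-splitting idea is what converts the hypothesis $0\in\conv\{x_i\}$ into a usable symmetric structure; your dual-functional approach discards this structure and would have to recover it in the deferred inequality. If you want to salvage your route, the realistic option is to prove your conjectured inequality by first reducing to the paper's antipodal configuration -- at which point you would simply be reproducing the paper's argument.
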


In order to prove Theorem~\ref{thm-about-Klengths} we
shall need the following  lemmas.  

\begin{lemma}\label{lem:length-of-path-between-xpy-p}
Let $K$ be a centrally symmetric convex body, and let ${\cal P}$ be an
oriented polygonal closed path in ${\mathbb R}^{2n}$ which satisfies the following two
properties:

\begin{itemize}

\item It passes through two points, $x,y$, outside
$K$ i.e., $\|x\|_K\ge 1, \|y\|_K \ge 1$,

\item It passes through two points $z, -z$, such that $x$ and $y$ do not lie in 
the same connected component of ${\cal P} \setminus \{-z,z\}$.
\end{itemize}
Then,  ${\rm Length}_K({\cal P}) \ge 4$.
\end{lemma}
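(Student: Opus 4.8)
The plan is to use the two antipodal points $z,-z$ to cut the closed path ${\cal P}$ into two pieces and to bound each piece separately. First I would observe that, since ${\cal P}$ is a closed (topologically circular) path passing through $z$ and $-z$, the complement ${\cal P}\setminus\{z,-z\}$ consists of exactly two arcs, and by the second hypothesis $x$ lies on one of them while $y$ lies on the other. Writing $\gamma_x$ for the (closed-up) sub-path of ${\cal P}$ running from $z$ to $-z$ through $x$, and $\gamma_y$ for the complementary sub-path from $-z$ back to $z$ through $y$, we have ${\rm Length}_K({\cal P})={\rm Length}_K(\gamma_x)+{\rm Length}_K(\gamma_y)$.

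Next I would estimate each arc. Since $K=-K$, the gauge $\|\cdot\|_K$ is a genuine norm, so the triangle inequality shows that the $\|\cdot\|_K$-length of any polygonal path joining two points is at least the $\|\cdot\|_K$-distance between its endpoints. Applying this to the two halves of $\gamma_x$ determined by the intermediate point $x$ gives
$$
{\rm Length}_K(\gamma_x)\ \ge\ \|x-z\|_K+\|x-(-z)\|_K\ =\ \|x-z\|_K+\|x+z\|_K,
$$
and symmetrically ${\rm Length}_K(\gamma_y)\ge \|y-z\|_K+\|y+z\|_K$.

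The crux is then a one-line convexity estimate that again relies on central symmetry: by the triangle inequality applied to the vectors $x-z$ and $x+z$,
$$
\|x-z\|_K+\|x+z\|_K\ \ge\ \|(x-z)+(x+z)\|_K\ =\ \|2x\|_K\ =\ 2\|x\|_K\ \ge\ 2,
$$
the last step using $\|x\|_K\ge 1$. The identical estimate for $y$ yields ${\rm Length}_K(\gamma_y)\ge 2\|y\|_K\ge 2$, and adding the two bounds gives ${\rm Length}_K({\cal P})\ge 4$, as required.

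The argument is short and I do not anticipate a genuine obstacle; the only point requiring care is the topological bookkeeping in the first step — verifying that the antipodal pair really separates ${\cal P}$ into two arcs with $x$ and $y$ on opposite sides, which is exactly what the second hypothesis guarantees, and ensuring that $\|x\|_K\ge 1$ and $\|y\|_K\ge 1$ are each invoked on the correct arc. Note that central symmetry of $K$ enters twice, both in recognizing $\|\cdot\|_K$ as a norm and in the combination $\|x-z\|_K+\|x+z\|_K\ge 2\|x\|_K$, so it is essential throughout.
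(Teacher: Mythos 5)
Your proof is correct and follows essentially the same route as the paper's: both cut ${\cal P}$ at the antipodal pair $\pm z$ and reduce to showing that each of the two resulting arcs has $\|\cdot\|_K$-length at least $2$, using central symmetry and the triangle inequality. The only cosmetic difference is that the paper bounds an arc ${\cal P}'$ by doubling it into the closed path ${\cal P}' \cup -{\cal P}'$ through $x$ and $-x$, whereas you apply the triangle inequality twice directly through the intermediate point $x$ to get $\|x-z\|_K + \|x+z\|_K \ge 2\|x\|_K \ge 2$; this is an equivalent, if anything slightly more streamlined, packaging of the same estimate.
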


\begin{proof}[{\bf Proof of Lemma~\ref{lem:length-of-path-between-xpy-p}}] 
%\noindent [{\bf Proof of Lemma~\ref{lem:length-of-path-between-xpy-p}}] 
We will show that each of the two parts of the polygonal path, 
between the points $z$ and
$-z$, has length at least $2$. Indeed, call one of these paths ${\cal P}'$, then %we may 
consider the new path ${\cal P}' \cup -{\cal P}'$, which is a closed path that
joins $x$ and $-x$. By the triangle inequality, the length
of each part between $x$ and $-x$ is at least $\|x - (-x)\|_K \ge 2$.
Since we have doubled the path ${\cal P}'$, we get that ${\rm Length}_K({\cal P}') \ge 2$.
In a similar way, the other part of the path ${\cal P}$ joining $-z$ and $z$ has length
at least $2$, and we conclude that ${\rm Length}_K({\cal P}) \ge 4$ as claimed, and the proof of the lemma is complete.  
%\qed
\end{proof}

\begin{lemma} \label{Lemma-about-short-cuts} Let $x_1,\ldots,x_m$ be 
points in ${\mathbb R}^n$, %where $m\ge 3$,
and  let $z \in {\rm Conv}\{x_1,\ldots,x_m\}$. Then $${\rm
Length}_K(x_1x_2\cdots x_m) \geq {\rm Length}_K(x_1 z x_m).$$
\end{lemma}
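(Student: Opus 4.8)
The plan is to cancel a shared edge, reduce to the vertices by convexity, and finish with two triangle inequalities. First I would note that both sides of the claimed inequality contain the edge $\|x_1-x_m\|_K$: on the left it is the closing edge of the cyclic path $x_1x_2\cdots x_m$, and on the right it is the edge joining the endpoints of the triangle $x_1zx_m$. Subtracting it reduces the assertion to
\[
\sum_{i=1}^{m-1}\|x_{i+1}-x_i\|_K \;\geq\; \|z-x_1\|_K+\|x_m-z\|_K,
\]
that is, the broken path from $x_1$ to $x_m$ through the intermediate vertices is at least as long as the detour $x_1\to z\to x_m$.

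Next I would regard the right-hand side as the value at $z$ of the function $f(w)=\|w-x_1\|_K+\|x_m-w\|_K$. Since $\|\cdot\|_K$ is a norm and the maps $w\mapsto w-x_1$ and $w\mapsto x_m-w$ are affine, $f$ is convex on $\mathbb{R}^n$. Because $z$ lies in the compact convex polytope $P={\rm Conv}\{x_1,\ldots,x_m\}$, and a convex function on a polytope attains its maximum at an extreme point---while every extreme point of $P$ belongs to $\{x_1,\ldots,x_m\}$---we obtain $f(z)\leq\max_{1\leq k\leq m}f(x_k)$. It therefore suffices to bound $f(x_k)$ for each single vertex $x_k$.

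Finally, for a fixed index $k$ I would split the open path at $x_k$ and apply the triangle inequality to each half: along $x_1,\ldots,x_k$ this gives $\|x_k-x_1\|_K\leq\sum_{i=1}^{k-1}\|x_{i+1}-x_i\|_K$, and along $x_k,\ldots,x_m$ it gives $\|x_m-x_k\|_K\leq\sum_{i=k}^{m-1}\|x_{i+1}-x_i\|_K$. Summing the two yields $f(x_k)\leq\sum_{i=1}^{m-1}\|x_{i+1}-x_i\|_K$, which is precisely the left-hand side of the reduced inequality; combined with the previous step this proves that inequality, and restoring the cancelled term $\|x_1-x_m\|_K$ gives the lemma. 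I do not expect a real obstacle here: once the shared edge is removed the argument is elementary. The only point needing care is the passage to vertices, which rests on the standard fact that a convex function on a polytope is maximized at an extreme point, together with the observation that the extreme points of the convex hull of a finite set lie in that set; the remainder is routine bookkeeping with the triangle inequality.
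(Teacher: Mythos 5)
Your proof is correct, but it takes a genuinely different route from the paper. The paper argues by induction on $m$: it writes $z=\lambda_m x_m+(1-\lambda_m)d$ with $d\in{\rm Conv}\{x_1,\ldots,x_{m-1}\}$, exploits the collinearity identity $\|x_m-d\|_K=\|x_m-z\|_K+\|d-z\|_K$, and then applies the inductive hypothesis to $d$ together with a chain of triangle inequalities. You instead cancel the common edge $\|x_1-x_m\|_K$, observe that $f(w)=\|w-x_1\|_K+\|x_m-w\|_K$ is convex, reduce from $z$ to the vertices $x_k$ (via the extreme-point maximum principle, or even more simply via Jensen: $f(z)\le\sum_j\lambda_j f(x_j)\le\max_k f(x_k)$, which avoids any appeal to extreme-point theory), and finish by bounding $f(x_k)$ with the triangle inequality applied to the two halves of the path split at $x_k$. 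Your argument is non-inductive and shorter, and it in fact yields the slightly stronger statement ${\rm Length}_K(x_1zx_m)\le\max_k{\rm Length}_K(x_1x_kx_m)$; the paper's inductive construction of the auxiliary point $d$ is what it later leans on when it says one can ``go through the proof'' to extract the equality case for strictly convex $K$ (all triangle inequalities must be equalities, forcing collinearity), an analysis that is equally available from your proof since equality would force equality in Jensen and in both triangle inequalities. Either way, the lemma as stated is fully established by your argument.
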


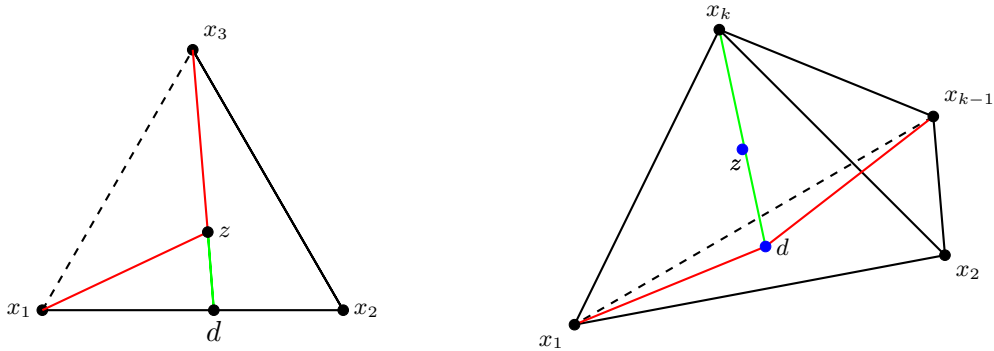
\begin{figure}[h!]
\begin{center}
  \begin{tikzpicture}[scale=1]

\path coordinate (c1) at (4*0.5,-4*0.433) coordinate (c2) at
(-4*0.5,-4*0.433) coordinate (c3) at (0,4*0.433); \draw[important
line] (c1) node[right] {{\footnotesize $x_2$}} --  (c3) node[above
right] {{\footnotesize $x_3$}} -- (c1) -- (c2);

\draw[dashed] [important line] (c2) node[left] {{\footnotesize
$x_1$}} -- (c3);

 \filldraw [black]
      (c1) circle (2pt)
       (c2) circle (2pt)
        (c3) circle (2pt);

\path coordinate (z) at (2*0.1,-2*0.3464) coordinate (d1) at
(4*0.07,-4*0.433) coordinate (d2) at (4*0.2,4*0.0866) coordinate
(d3) at (-4*0.1,4*0.259 ); \draw[important line, green] (d1)
node[below, black] {$d$} -- (z) node[above right, black]
{{\footnotesize }}-- (d1);

\draw[important line, red] (c2)  -- (z) node[right, black]
{{\footnotesize $z$}}-- (c3);

 \filldraw [black]
      (d1) circle (2pt)
       (d2) circle (0pt)
       (z) circle (2pt);

%      % We start the second graph
      \begin{scope}[xshift=7cm]

\path coordinate (d11) at (1,-0.5,1) coordinate (z11) at
(0.5,0.6,0.5) node[above right, black] {{\footnotesize $z$}}
coordinate (a11) at (4,2,3) node[above right, black] {{\footnotesize
$z$}} coordinate (a22) at (3,-1,0) coordinate (a33) at (0,2,0)
 coordinate (a44) at
(0,0,5); \draw [important line] (a11) node[above right , black]
{{\footnotesize $x_{k-1}$}}  -- (a22) ; \draw[important line] (a11)
-- (a33); \draw[dashed] [important line] (a11) -- (a44);
\draw[important line] (a22) -- (a33);

\draw[important line] (a22) node[below right , black]
{{\footnotesize $x_2$}} -- (a44) node[below left , black]
{{\footnotesize $x_1$}};

 \draw[important line] (a33)node[above , black]
{{\footnotesize $x_k$}} --
 (a44);

 \draw[important line, green] (d11) node[right, black]
{{\footnotesize $d$}} -- (a33);

 \draw[important line, red] (a44) -- (d11);
  \draw[important line, red] (d11) -- (a11);

\filldraw [blue]
 (z11) circle (2pt)
  (d11) circle (2pt);
  \filldraw [black]
      (a11) circle (2pt)
       (a22) circle (2pt)
       (a44) circle (2pt)
       (a33) circle (2pt);

\end{scope}
\end{tikzpicture}

\caption{The $2$-dimensional case (left), and the $k$-dimensional
case (right).}
\end{center}
\end{figure}

\begin{proof}[{\bf Proof of Lemma~\ref{Lemma-about-short-cuts}}] We proceed by induction on the number $m$. The case $m=2$ is trivial.
Next, we assume that the claim holds
for $m-1$. Since $z\in {\rm Conv}\{x_1,\ldots,x_m\}$, 
it can be written as a convex combination of the form 
\[ z = \sum_{j=1}^m \lambda_j x_j, \  {\rm where}  \ \ \sum_j^m \lambda_j =1 \ {\rm and} \  \lambda_j \ge 0.\]
We put $d=\sum_{j=1}^{m-1} \frac{\lambda_j}{1-\lambda_m} x_j $. 
Thus, $z=\lambda_m x_m + (1-\lambda_m)d$ (see Figure 2). In particular, one has
\begin{equation}  \label{eq1-in-lemma-3.3}  \|x_m-d\|_K = \|x_m -z\|_K + \|d-z\|_K. \end{equation}
%From the induction hypothesis assumption 
By the inductive hypothesis, %it follows that
${\rm Length}_K(x_1dx_{m-1}) \leq {\rm Length}_K(x_1 x_2 \cdots x_{m-1})$. 
Equivalently, 
\begin{equation} \label{eq2-in-lemma-3.3} \|d-x_1\|_K + \|x_{m-1}-d\|_K \le \sum_{j=1}^{m-2}\|x_{j+1}-x_j\|_K. \end{equation}
Combining~$(\ref{eq2-in-lemma-3.3})$ and~$(\ref{eq2-in-lemma-3.3})$, we conclude that
\begin{eqnarray*} {\rm Length}_K(x_1zx_m)  &=& \|z-x_1\|_K+ \|x_m -z\| _K +\|x_1-x_m\|_K \\
&\le& \|d-x_1\|_K + \|z-d\|_K+ \|x_m -z\| _K +\|x_1-x_m\|_K \\
&=& \|d-x_1\|_K + \|x_m-d\|_K+ \|x_1-x_m\|_K \\
&\le &  \|d-x_1\|_K + \|x_{m-1}-d\|_K+ \|x_m-x_{m-1}\|_K + \|x_1-x_m\|_K \\
& \le & \sum_{j=1}^{m-2}\|x_{j+1}-x_j\|_K +\|x_m-x_{m-1}\|_K + \|x_1-x_m\|_K\\
& = &{\rm Length}_K(x_1x_2\cdots x_{m}).
\end{eqnarray*}
The proof of Lemma~\ref{Lemma-about-short-cuts} is thus complete. \end{proof}

\noindent {\bf Remark:} In the next lemma we denote the indices $j =
j({\rm mod}(m))$, so that $m+1 = 1$. 
Also, by $\{x_k\}_{k=j}^i$, where $i<j$, we mean $\{x_1,\ldots, x_i\} \cup \{x_j,\ldots, x_m\}$. 

%Also, by $j<k<i$, for $i<j$, we mean $\{ 1\le k < i \} \cup \{ j < k \le m\}$.

\begin{lemma}\label{lem:conv-intersect-minusconv}
Let $x_1, \ldots, x_{m}\in \R^n$, where $m\ge 3$, such that $0 \in
{\rm Conv}(\{x_i\}_{i=1}^{m})$. Then there exist two indices $i_0<j_0$
for which \begin{equation}  \label{nonemptyintersection} \conv
(\{x_k\}_{k=i_0}^{j_0}) \bigcap -\conv(\{x_k\}_{k=j_0}^{i_0-1}) \neq \emptyset.
\end{equation}
\end{lemma}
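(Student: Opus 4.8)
The plan is to recast the required non-empty intersection \eqref{nonemptyintersection} as the problem of splitting a convex representation of the origin into two consecutive arcs of equal mass, and then to solve the latter by a discrete intermediate value (``necklace bisection'') argument. First, observe that, writing $A$ and $B$ for the two consecutive cyclic arcs determined by the cut indices $i_0$ and $j_0$, condition \eqref{nonemptyintersection} is nothing but the existence of a single point $z$ with $z\in\conv(A)$ and $-z\in\conv(B)$.

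Since $0\in\conv(\{x_i\}_{i=1}^{m})$, I would fix a convex combination $0=\sum_{i=1}^{m}\lambda_i x_i$ with $\lambda_i\ge 0$ and $\sum_i\lambda_i=1$. The key point is that such a $z$ is produced by \emph{any} cut that divides the total $\lambda$-mass into two equal halves: if $A$ and $B$ are consecutive arcs and $\mu,\nu$ denote the restrictions of $\lambda$ to $A$ and $B$ (with the weight of a shared boundary vertex split between the two) satisfying $\sum_{i\in A}\mu_i=\sum_{j\in B}\nu_j=\tfrac12$, then $z:=2\sum_{i\in A}\mu_i x_i$ is a convex combination of the points of $A$, so $z\in\conv(A)$, while $-z=2\sum_{j\in B}\nu_j x_j\in\conv(B)$ because the two half-sums add up to $2\cdot 0=0$.

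To produce such a cut I would walk around the cycle starting from the edge preceding a vertex $x_{i_0}$ with $\lambda_{i_0}\le\tfrac12$ (such a vertex exists as soon as no single weight exceeds $\tfrac12$), treating each weight $\lambda_i$ as spread over the vertex's slot so that the accumulated mass is a continuous function increasing from $0$ to $1$. By the intermediate value theorem the accumulated mass equals $\tfrac12$ at a well-defined second cut point, lying either on an edge or in the interior of a vertex $x_{j_0}$; in the latter case $x_{j_0}$ is shared between $A$ and $B$ and its weight is split. This yields indices $i_0<j_0$ and the two arcs $A=\{x_k\}_{k=i_0}^{j_0}$ and $B=\{x_k\}_{k=j_0}^{i_0-1}$ of the statement, together with the point $z$ above (padding a degenerate one-vertex arc by an adjacent vertex carrying coefficient $0$ in the boundary case where the second cut falls exactly on an edge).

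The step I expect to be the main obstacle is the \emph{concentrated} case, where some weight satisfies $\lambda_v>\tfrac12$: then every bisection must split the single vertex $x_v$, forcing it to be shared and collapsing the tight cut to a one-vertex arc, so that the arcs can no longer be displayed in the form above with a single shared vertex. Here I would argue directly by taking $z=x_v$ and letting $B$ be the entire complementary (wrap-around) arc: the membership $-z\in\conv(B)$ then follows from $0=\lambda_v x_v+\sum_{i\neq v}\lambda_i x_i$, which gives $-x_v=\tfrac{1-\lambda_v}{\lambda_v}\,w$ for some $w\in\conv(\{x_i\}_{i\neq v})$ with $\tfrac{1-\lambda_v}{\lambda_v}<1$; hence $-x_v$ lies on the segment joining $w$ and $0$, and since both endpoints belong to $\conv(\{x_i\})$ so does $-x_v$. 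Modulo this concentrated case and the routine reindexing modulo $m$ needed to list the arcs with $i_0<j_0$, the discrete intermediate value step is the only analytic ingredient, and the rest of the construction is forced.
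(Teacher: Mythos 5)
Your core construction---fixing a representation $0=\sum_i\lambda_i x_i$, cutting the cyclic sequence into two consecutive arcs each of $\lambda$-mass $\tfrac12$ with the weight of the crossing vertex split between them, and taking $z$ to be twice the half-sum over one arc---is exactly the paper's proof: there one assumes (after cyclic relabelling) that $\eta_1,\eta_m<\tfrac12$, takes $j_0$ maximal with $\sum_{i<j_0}\eta_i\le\tfrac12$, writes $\eta_{j_0}=\eta_{j_0}'+\eta_{j_0}''$ so that the two halves balance, and sets $p=2\bigl(\sum_{i<j_0}\eta_i x_i+\eta_{j_0}'x_{j_0}\bigr)$. So in the case you call unconcentrated, your argument and the paper's coincide.

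The genuine gap is your ``concentrated'' case $\lambda_v>\tfrac12$, in two respects. First, the argument you give is a \emph{non sequitur}: with $B$ the complementary arc, which omits $x_v$, you need $-x_v\in\conv(\{x_i\}_{i\ne v})$, but what your segment argument yields is only $-x_v\in\conv(\{x_i\}_{i=1}^{m})$, because the endpoint $0$ of the segment $[0,w]$ lies in the hull of \emph{all} the points; it need not lie in $\conv(\{x_i\}_{i\ne v})$ at all, precisely because the weight $\lambda_v>\tfrac12$ is indispensable in representing $0$. Second, the statement you are trying to prove there is actually false, and no choice of arcs can rescue the designated witness $z=x_v$: take $x_1=(-1,1)$, $x_2=(\tfrac{2}{3},0)$, $x_3=(-1,-1)$ in $\RR^2$ with weights $(\tfrac{1}{5},\tfrac{3}{5},\tfrac{1}{5})$, so $v=2$. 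For the pairs $(i_0,j_0)=(2,3)$ and $(1,3)$ the intersection in \eqref{nonemptyintersection} is empty, and for $(i_0,j_0)=(1,2)$ it is the single point $p=(0,\tfrac{2}{5})$; in particular neither $x_2$ nor $-x_2$ is ever a witness, and the unique witness arises exactly by splitting the heavy weight, $p=2\bigl(\tfrac{1}{5}x_1+\tfrac{3}{10}x_2\bigr)$.

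The repair is that no case distinction is needed, and it is exactly what the paper's normalization accomplishes: when $\lambda_v>\tfrac12$, start your walk at an edge neither of whose endpoints is $x_v$ (possible since $m\ge3$). The accumulated mass is at most $1-\lambda_v<\tfrac12$ upon entering the slot of $x_v$ and at least $\lambda_v>\tfrac12$ upon leaving it, so the half-mass cut falls strictly inside that slot; hence $x_v$ becomes the shared vertex $x_{j_0}$, its weight is split between the two arcs, both arcs contain at least two vertices, and your general formula for $z$ applies verbatim. (This is the role of the hypothesis $\eta_1,\eta_m<\tfrac12$ in the paper: the walk opens at the edge $x_mx_1$, away from any heavy vertex.) In short, the heavy vertex must play the role of the shared, split vertex of the partition, not of the intersection point.
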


\begin{proof}[{\bf Proof of Lemma~\ref{lem:conv-intersect-minusconv}}] Since $0 \in
{\rm Conv}(\{x_i\}_{i=1}^{m})$,
there are $\eta_i \ge 0$ such that
\begin{equation} \label{uniquepres} \sum_{i=1}^{m} \eta_i x_i =
0 \ \  {\rm and} \ \ \sum_{i=1}^{m} \eta_i = 1.
\end{equation} 
We assume, without loss of generality, that $\eta_1,\eta_m < 1/2$, and we shall show that the claim holds with 
$i_0=1$. 
We choose $j$ to be the maximal
possible index so that  $ \sum_{i=1}^{j-1}\eta_{i}\le 1/2$. 
In particular, $ \sum_{i=1}^{j}\eta_{i}> 1/2$.
Further, since $\eta_m <1/2$, we know that $j\neq m$. 
Let $\eta_j = \eta_j'+\eta_j''$ such that $ \sum_{i=1}^{j-1}\eta_{i} + \eta_j' = 1/2$. 
Of course in this case  \[ \sum_{i=j+1}^{m}\eta_{i} + \eta_j'' = 1/2.\] 
Since $\sum_{i=1}^{m} \eta_i x_i =
0$, one has  
\[  \sum_{i=1}^{j-1} \eta_i x_i + \eta_j'x_j = -\Bigl ( \sum_{i=j+1}^{m} \eta_i x_i + \eta_j''x_j \Bigr).  \] 
On both sides the coefficients  sum to $1/2$, so that letting 
\[ p = 2 \Bigl (  \eta_j'x_j  + \sum_{i=1}^{j-1} \eta_i x_i  \Bigr ), \]
%\[ p = 2[\sum_{i=1}^{j-1} \eta_i x_i + \eta_j'x_j ]\]
we get $p\in \conv (\{x_i\}_{i=1}^{j})$ and $-p \in \conv(\{x_i\}_{i=j}^m)$, and the claim is proved.
\end{proof} 

\begin{remark}
{\rm Note that in fact we proved something slightly stronger, namely, that in each of the subsets $\{x_{i_0}, \ldots ,x_{j_0}\}$, $\{x_{j_0},\ldots,x_{i_0-1}\}$ there are at least 2 points. } \end{remark} 

\begin{proof}[{\bf Proof of Theorem \ref{thm-about-Klengths}.}]
Let $\{x_1,\ldots,x_{m}\}\subset \RR^n\setminus {\rm int}(K)$ with $m\ge 2$, and such that $0 \in {\rm Conv} (\{x_i\}_{i=1}^{m})$. 
We use Lemma \ref{lem:conv-intersect-minusconv} to find $1\le i_0< j_0\le m $ and $p\in \RR^n$ such that 
\[ p\in \conv ( \{ x_k \}_{k=i_0}^{j_0} ) \  \  {\rm and} \  -p \in \conv ( \{x_k\}_{k=j_0}^{i_0-1}). \] 
From Lemma \ref{Lemma-about-short-cuts}  it follows that
$$\sum_{k=1}^{j_0-i_0} \|x_{i_0+k}-x_{i_0+k-1}\|_K \geq \|p-x_{i_0} \|_K+\|x_{j_0}-p\|_K.$$
Note that $-p$ is in ${\rm Conv} (\{x_k \}_{k=j_0}^{i_0-1})$  and therefore evidently in  ${\rm Conv} ( \{x_k\}_{k=j_0}^{i_0})$. Hence, 
$$\sum_{k=1}^{i_0-j_0} \|x_{{j_0}+k}-x_{j_0+k-1}\|_K \geq \|-p-x_{j_0}\|_K+\|x_{i_0}-(-p)\|_K.$$
%We thus get that 
Combining the last two inequalities yields, 
\[ \sum_{k=1}^{m} \|x_{k}-x_{k-1}\|_K \geq \|p-x_{i_0} \|_K+\|x_{j_0}-p\|_K+\|-p-x_{j_0} \|_K+\|x_{i_0}-(-p)\|_K.\]
We thus bound from below the length we are considering by the length of the closed path between $(x_{i_0},p,x_{j_0},-p)$, where $\|x_{i_0} \|_K\ge 1$ and $\|x_{j_0}\|_K\ge 1$. 
By Lemma~\ref{lem:length-of-path-between-xpy-p}, the length of this latter path, with respect to $\| \cdot \|_K$,  is at least 4. 
 
 In the strictly convex case, where triangle inequalities are sharp unless the points are on the same line, 
going through the proof one may easily analyse the equality case, which is when there are only two points, on $\partial K$, which are antipodal.

\end{proof}

%\begin{remark} The equality case in the theorem should follow from the proof. In the strictly convex case it will be segments with points on the boundary. Also the general case should be not too difficult. 
%\end{remark} 

\subsection{Second Geometric Fact}\label{sec:noneedfor0}

\begin{theorem}\label{thm-about-Klengths-withoutconv}
Let $K\subset \RR^n$ be a smooth centrally symmetric convex body, and consider points
%Let 
$\{q_1, \ldots, q_{m}\}\subset \partial K$ with $m\ge 2$, such that $0 \in \mathop{\rm Conv}\{n_K(q_i)\}_{i=1}^{m}$, where $n_K(q_i) = \nabla g_K(q_i)$ is the outer normal to $K$ at $q_i$.
Then, 
$$
{\rm Length}_K(q_1 q_2 \cdots q_{m}) \geq 4.
$$
%Then, $$ \|x_1 - x_{m+1} \|_K + \sum_{i=1}^m \| x_{i+1}-x_i
%\|_K \geq 4.$$
%Moreover, this minimium is attained for $m=1$ and $x_1=-x_2\in \partial K$, and when the body $K$ is strictly convex, thess are %the only equality case. 
\end{theorem}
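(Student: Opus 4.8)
The plan is to prove a sharp lower bound by producing a good \emph{dual certificate}, rather than by reducing directly to Theorem~\ref{thm-about-Klengths}: comparing $\mathrm{Length}_K$ of the position path with the $\|\cdot\|_{K^\circ}$-length of the path of normals is not transparent (the two do not match edge-by-edge). The starting point is the duality $\|w\|_K=g_K(w)=h_{K^\circ}(w)=\max_{v\in K^\circ}\langle w,v\rangle$, which gives, for \emph{any} choice of vectors $v_1,\dots,v_m\in K^\circ$ (indices cyclic, $v_0:=v_m$),
\[
\mathrm{Length}_K(q_1\cdots q_m)=\sum_{i=1}^m\|q_{i+1}-q_i\|_K\ \ge\ \sum_{i=1}^m\langle q_{i+1}-q_i,\,v_i\rangle=\sum_{i=1}^m\langle q_i,\,v_{i-1}-v_i\rangle,
\]
the last equality being summation by parts. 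Thus it suffices to exhibit $v_i\in K^\circ$ with $\sum_i\langle q_i,\,v_{i-1}-v_i\rangle\ge 4$.

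The certificate is read off from the hypothesis. Write $0=\sum_{i=1}^m\lambda_i\,n_K(q_i)$ with $\lambda_i\ge0$ and $\sum_i\lambda_i=1$. Since $K$ is smooth and $g_K$ is $1$-homogeneous, Euler's identity gives $\langle q_i,n_K(q_i)\rangle=g_K(q_i)=1$, and moreover $n_K(q_i)\in\partial K^\circ$, so $\|n_K(q_i)\|_{K^\circ}=1$. I would like $v_i$ with $v_{i-1}-v_i=4\lambda_i\,n_K(q_i)$, for then $\sum_i\langle q_i,\,v_{i-1}-v_i\rangle=4\sum_i\lambda_i\langle q_i,n_K(q_i)\rangle=4$. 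Putting $s_i=\sum_{k\le i}\lambda_k\,n_K(q_k)$ (so $s_0=s_m=0$), this forces $v_i=c-4s_i$ for some fixed $c$, which automatically closes up cyclically; the only thing to verify is that $c$ can be chosen so that every $v_i$ lies in $K^\circ$.

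This is the geometric heart of the argument. The points $4s_0,\dots,4s_m$ are the vertices of a closed polygonal path whose $\|\cdot\|_{K^\circ}$-length is $4\sum_i\|\lambda_i\,n_K(q_i)\|_{K^\circ}=4\sum_i\lambda_i=4$. I will invoke the normed-space version of the classical planar fact that a closed curve of length $L$ lies in a disk of radius $L/4$: \emph{any closed path of $\|\cdot\|_{K^\circ}$-length $L$ is contained in some translate $c+\tfrac{L}{4}K^\circ$.} With $L=4$ this yields a point $c$ with $4s_i\in c+K^\circ$ for all $i$; setting $v_i:=c-4s_i$ and using the central symmetry $K^\circ=-K^\circ$ gives $v_i\in K^\circ$ together with $v_{i-1}-v_i=4\lambda_i\,n_K(q_i)$, which completes the proof.

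The remaining point, which I expect to be the crux, is the enclosing lemma and the fact that it is precisely what produces the sharp constant $4$ rather than the $2$ that a crude estimate (radius $\le$ diameter) would give. To prove it, parametrize the closed path $\Gamma$ by $\|\cdot\|_{K^\circ}$-arclength, fix a basepoint $A$ and let $B$ be the point at arclength $L/2$, so the two arcs from $A$ to $B$ each have length $L/2$; take $c=\tfrac12(A+B)$. For any point $P$ on $\Gamma$, lying on (say) the arc of length $L/2$ through $P$, comparing norm to arclength gives $\|A-P\|_{K^\circ}+\|P-B\|_{K^\circ}\le L/2$, while the midpoint identity $P-c=\tfrac12\big((P-A)+(P-B)\big)$ and the triangle inequality give $\|P-c\|_{K^\circ}\le\tfrac12\big(\|P-A\|_{K^\circ}+\|P-B\|_{K^\circ}\big)\le L/4$. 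Applying this to the vertices $P=4s_i$ gives the required containment, and the argument is valid verbatim in any norm. The degenerate case $m=2$, where necessarily $n_K(q_2)=-n_K(q_1)$ and $\lambda_1=\lambda_2=\tfrac12$, is covered by exactly the same computation.
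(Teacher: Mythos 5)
Your proof is correct, and it takes a genuinely different route from the paper's. The paper deduces this theorem from Theorem~\ref{thm-about-Klengths} (the ``first geometric fact'') through a chain of reductions: Carath\'eodory's theorem to cut down to at most $n+1$ points, a projection onto a quotient so that the supporting half-spaces at the $q_i$ bound a simplex, a topological covering lemma for simplices (Lemma~\ref{lemma-about-simplices}, proved by a degree argument on the boundary of a cross-polytope), and finally a convex-combination/scaling trick producing new points $q_i'\notin \mathrm{int}(K)$ with $0\in\mathrm{Conv}\{q_i'\}$ to which Theorem~\ref{thm-about-Klengths} applies. You bypass all of this: you lower-bound $\mathrm{Length}_K$ by pairing each edge with a dual vector $v_i\in K^\circ$ and summing by parts, and you build the dual certificate $v_i=c-4s_i$ from the partial sums $s_i$ of the convex combination $0=\sum_i\lambda_i n_K(q_i)$, the existence of the translation $c$ coming from the classical fact that a closed curve of $\|\cdot\|_{K^\circ}$-length $L$ lies in a translate of $\tfrac{L}{4}K^\circ$, applied to the closed polygon through the points $4s_i$, whose length is exactly $4$ since $\|n_K(q_i)\|_{K^\circ}=1$ and $\sum_i\lambda_i=1$. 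I checked the individual steps and they are all sound: $\|w\|_K=h_{K^\circ}(w)$, Euler's identity $\langle q_i,n_K(q_i)\rangle=g_K(q_i)=1$, $n_K(q_i)\in\partial K^\circ$, the cyclic summation by parts (legitimate because $s_0=s_m=0$ forces $v_0=v_m$), the use of $K^\circ=-K^\circ$ to convert $4s_i\in c+K^\circ$ into $v_i\in K^\circ$, and the half-perimeter/midpoint proof of the enclosing lemma, which is valid verbatim in any symmetric norm. What your approach buys: it is self-contained and elementary (no degree theory, no reduction to Theorem~\ref{thm-about-Klengths}), it handles all $m$ and all dimensions in one stroke, it makes the constant $4$ transparent (it is precisely the $L/4$ enclosing radius), and it only uses that $n_K(q_i)$ is a norming functional at $q_i$, so smoothness is not really needed. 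What the paper's route buys: by reducing to Theorem~\ref{thm-about-Klengths}, whose equality case is characterized (antipodal pairs $\pm q\in\partial K$ when $K$ is strictly convex), the equality analysis carries over, and this is exactly what the paper needs afterwards for Theorem~\ref{thm-about-strictly-convex-billiards} on $2$-bouncing orbits; your argument as written does not track equality cases, though the statement you were asked to prove does not require them.
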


The proof shall make use of Theorem \ref{thm-about-Klengths}. We shall construct a new polygonal trajectory, $(q_1' q_2' \cdots q_{m'}')$ at most as long as the original one, such that $0\in   {\rm Conv}\{q_i'\}_{i=1}^{m'}$.

We need the following simple lemma about simplices:
 
\begin{lemma} \label{lemma-about-simplices}
\label{lemma-simplexcover}
Let $S$ denote a non-degenerate simplex in ${\mathbb R}^k$ with vertices $\{x_i\}_{i=1}^{k+1}$, and let $\{q_i\}_{i=1}^{k+1}\subset {\mathbb R}^k$ such that $q_i \in {\rm Conv}(\{x_j\}_{j\neq i})$.  Assume $0\in {\rm Conv}(\{x_i\}_{i=1}^{k+1})$. Then, for some  subset $I\in \{1,\ldots,n+1\}$ of cardinality at least 1, the origin $0$ lies in the convex hull of the union $\{q_i\}_{i\in I}\cup \{x_j\}_{j\not\in I}$. 
\end{lemma}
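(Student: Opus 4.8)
The plan is to reduce the statement to a single covering claim: the ``extreme'' choices of $I$ already subdivide $S$. Write $P_I := \conv(\{q_i\}_{i\in I}\cup\{x_j\}_{j\notin I})$, so the goal is to produce a \emph{nonempty} $I$ with $0\in P_I$, and let $0=\sum_{i=1}^{k+1}\lambda_i x_i$ be the unique barycentric representation ($\lambda_i\ge 0$, $\sum_i\lambda_i=1$). Two cases are immediate. If $0\in\partial S$, some coordinate vanishes, say $\lambda_{i_0}=0$, so $0\in \conv(\{x_j\}_{j\ne i_0})$; since $q_{i_0}\in \conv(\{x_j\}_{j\ne i_0})$ by hypothesis, $P_{\{i_0\}}$ equals this facet and contains $0$, so $I=\{i_0\}$ works. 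Writing $Q:=\conv(q_1,\dots,q_{k+1})=P_{\{1,\dots,k+1\}}$, if $0\in Q$ we are done with $I=\{1,\dots,k+1\}$. Hence we may assume $0\in\operatorname{int}(S)\setminus Q$.

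The heart of the argument is the claim that $S$ is covered by $Q$ together with the $k+1$ \emph{caps}
$$\operatorname{cap}_i := \conv(\{q_j\}_{j\ne i}\cup\{x_i\}) = P_{\{1,\dots,k+1\}\setminus\{i\}}, \qquad i=1,\dots,k+1.$$
Granting $S=Q\cup\bigcup_{i}\operatorname{cap}_i$, the point $0$ lies in some cell; it is not in $Q$, so it lies in a cap $\operatorname{cap}_i=P_I$ with $I=\{1,\dots,k+1\}\setminus\{i\}$ nonempty, which finishes the proof.

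To establish the covering I would argue by volume additivity. All $k+2$ cells lie in $S$ by convexity, so it suffices to show their interiors are pairwise disjoint and that $\operatorname{vol}(Q)+\sum_{i}\operatorname{vol}(\operatorname{cap}_i)=\operatorname{vol}(S)$, since closed subsets of $S$ with disjoint interiors whose volumes exhaust $\operatorname{vol}(S)$ must cover $S$. Writing $q_i=\sum_{j\ne i}\mu_{ij}x_j$ with $\mu_{ij}\ge 0$, $\sum_{j}\mu_{ij}=1$, and collecting the $\mu_{ij}$ into the zero-diagonal row-stochastic matrix $M$, multilinearity of the determinant in the homogenized vertices gives $\operatorname{vol}(Q)=|\det M|\operatorname{vol}(S)$ and $\operatorname{vol}(\operatorname{cap}_i)=|\det M_{\hat i\hat i}|\operatorname{vol}(S)$, where $M_{\hat i\hat i}$ omits the $i$-th row and column. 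Thus the volume identity reduces to the algebraic statement
$$|\det M|+\sum_{i=1}^{k+1}|\det M_{\hat i\hat i}|=1,$$
which I would obtain from the characteristic polynomial of $M$ together with $\det(I-M)=0$ (row sums $1$) and $\operatorname{tr}M=0$ (zero diagonal); these force a uniform sign pattern collapsing the left-hand side to $\det M-\sum_i\det M_{\hat i\hat i}=(-1)^k$. Disjointness follows once one checks that across the shared facet $\conv(\{q_j\}_{j\ne i})$ the points $q_i$ and $x_i$ lie strictly on opposite sides, so distinct cells meet only in lower-dimensional faces.

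The main obstacle is precisely this covering claim; the two reductions and the passage from covering to ``disjoint interiors $+$ matching total volume'' are routine. The volume identity is sign-sensitive—the cells carry mixed orientations, so one cannot simply sum signed volumes—and the disjointness rests on the separation of $x_i$ from $q_i$ by the hyperplane through $\{q_j\}_{j\ne i}$, which must be handled in general position. A more robust alternative, which sidesteps this bookkeeping, is a degree- or Sperner/KKM-type argument showing directly that the closed cells cover $S$; I would fall back on this if the determinantal identity proves delicate to control across all degenerate configurations of the $q_i$.
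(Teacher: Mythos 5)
Your two preliminary reductions are fine, but the heart of your argument---the claim that $S$ is covered by $Q=\conv(q_1,\ldots,q_{k+1})$ together with the $k+1$ caps $\operatorname{cap}_i=\conv(\{q_j\}_{j\neq i}\cup\{x_i\})$---is \emph{false} for $k\geq 3$, and so is the determinantal identity you propose to prove it with. Take $k=3$ and let $q_i$ be the centroid of the facet opposite $x_i$, so that $M=\tfrac13(J-I)$ with $J$ the all-ones $4\times 4$ matrix. Then $|\det M|=\tfrac1{27}$ and $|\det M_{\hat i\hat i}|=\tfrac2{27}$ for each $i$, whence
\[
\operatorname{vol}(Q)+\sum_{i=1}^{4}\operatorname{vol}(\operatorname{cap}_i)
=\Bigl(\tfrac1{27}+\tfrac8{27}\Bigr)\operatorname{vol}(S)=\tfrac13\operatorname{vol}(S),
\]
so your identity $|\det M|+\sum_i|\det M_{\hat i\hat i}|=1$ returns $\tfrac13$ instead of $1$, and the closed union $Q\cup\bigcup_i\operatorname{cap}_i$ misses a subset of $S$ of measure $\tfrac23\operatorname{vol}(S)$. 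Concretely, the point with barycentric coordinates $(\tfrac12,\tfrac3{10},\tfrac1{10},\tfrac1{10})$ lies in none of your cells: membership in $Q$ forces all barycentric coordinates $\leq\tfrac13$; membership in $\operatorname{cap}_1$ forces the coefficient of $q_2$ to be $-0.15$; membership in $\operatorname{cap}_2,\operatorname{cap}_3,\operatorname{cap}_4$ forces two coefficients summing to $\tfrac32$. Yet this point does lie in the \emph{mixed} cell $\conv(x_1,x_2,q_3,q_4)$, i.e.\ $I=\{3,4\}$ (coefficients $0.3,0.1,0.3,0.3$). So the conclusion of the lemma genuinely requires subsets $I$ of intermediate cardinality, which your family omits entirely. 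Your picture is the planar one: for $k=2$ the nonempty subsets of $\{1,2,3\}$ are exactly the singletons (handled by your boundary case), their complements (your caps), and the full set, so the classical inscribed-triangle subdivision does cover; from $k=3$ on, the cells with $2\leq|I|\leq k-1$ appear and are indispensable.

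This is precisely what the paper's proof is built to handle: it takes \emph{all} $2^{k+1}$ cells $S_I$ simultaneously, realizing them as images of the facets of a $(k+1)$-dimensional cross-polytope $C$ under the piecewise linear map sending $-e_i\mapsto x_i$ and $+e_i\mapsto q_i$. Since this map $\partial C\to\R^k$ has degree zero, any point covered by the forbidden facet $S_\emptyset=S$ must also be covered by some $S_I$ with $|I|\geq1$; no disjointness, no volume bookkeeping, and no covering of $S$ by a subfamily is ever needed (the cells overlap, and, as the example shows, a proper subfamily need not cover). Your fallback remark about a degree or Sperner/KKM argument ``showing the closed cells cover $S$'' cannot rescue the proposal as written, because it is applied to the same deficient family of $k+2$ cells; once you enlarge the family to all $S_I$ and replace covering by cancellation of signed multiplicities, you have reproduced the paper's argument.
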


\begin{remark} {\rm 
It is easy to check that the cardinality of $I$ is at least $2$, whenever the origin $0$ is in the interior of $S$. Indeed, 
for $I=\{i_0\}$ we have that ${\rm Conv}(q_{i_0} \cup \{x_j\}_{j\neq i_0}) = {\rm Conv}( \{x_j\}_{j\neq i_0}) $ is a facet of $S$. 
}
\end{remark}

\begin{proof}[{\bf Proof of Lemma~\ref{lemma-about-simplices}}]
Let $S_I$ be the possibly degenerate simplex given by the convex hull $\mathop{\rm Conv}\left(\{q_i\}_{i\in I}\cup \{x_j\}_{j\not\in I}\right)$. If we add the ``forbidden'' simplex $S_{\emptyset } (=S)$ to the collection $\{S_I: {I\subset \{1,\ldots,k+1\}, |I|\ge 1}\}$, then these simplices together can be viewed as a piecewise linear image of the boundary of a $(k+1)$-dimensional combinatorial cross-polytope $C$. 

More precisely, we consider an abstract cross-polytope $C$ spanned by $\pm e_1, \ldots, \pm e_{k+1}\in \RR^{k+1}$ and map its vertices to $\RR^k$ so that $f(-e_i) = x_i$ and $f(+e_i) = q_i$. Then we extend the map $f$ to the boundary $\partial C$ piecewise linearly. This boundary $\partial C$ is a piecewise linear sphere of dimension $k$ and the degree of this piecewise linear map $f : \partial C\to \RR^k$ is well defined and equal to zero. Hence, as any point in the interior of $S$ is covered by $S_{\emptyset}=f(\conv\{-e_1,\ldots, -e_{k+1}\})$ (which is equal to $S$), it must also be covered by some other $S_I=f(\conv\left(\{+e_i\}_{i\in I}\cup \{-e_j\}_{j\not\in I}\right))$ with $|I| \ge 1$.
\end{proof}

\begin{proof}[{\bf Proof of Theorem \ref{thm-about-Klengths-withoutconv}}]
We are given a set of points $\{q_i\}_{i=1}^m \in \partial K$ such that $0$ is in the convex hull of their normals $n_K(q_i) = \nabla g_K(q_i)$. 

First, we invoke the Carath\'eodory theorem and reduce to the subset $\{q_i\}_{i\in J} \in \partial K$ of size $|J|\le n+1$, such that $0$ is still in the convex hull of $\{n_K(q_i)\}_{i\in J}$. Moreover, we consider the inclusion minimal $J$ with this property, so that $0$ is in the relative interior of the convex hull of $\{n_K(q_i)\}_{i\in J}$. It is clear that the closed polygonal line through this subset  $\{q_i\}_{i\in J}$ in the same cyclic order is not longer than the original one, by the triangle inequality. So we consider this new relabeled set $\{q_i\}_{i=1}^{m'}$.

Next, we claim that there is no loss of generality in assuming that the vectors  $\{n_K(q_i)\}_{i=1}^{m'}$ positively span ${\mathbb R}^n$. Indeed, if not, then we may 
project onto the quotient by the annihilator of the subspace spanned by the normals $\{n_K(q_i)\}_{i=1}^{m'}$. Under this projection, the length of the closed polygonal line cannot increase, if we use the induced norm on the quotient. The projections of $q_i$ remain on the boundary, since the respective normals are still norming functionals. 
This all means that we may assume without loss of generality that $m'=n+1$. We thus conclude that, considering the half-spaces supporting $K$ at $q_i$, we may assume that their intersection is an $n$-dimensional simplex. We denote its vertices by $\{x_i\}_{i=1}^{n+1}$, in such a way that $q_i \in {\rm Conv}(\{x_j\}_{j\neq i})$.

We claim that the length ${\rm Length}_K(q_1q_2\cdots q_{n+1})$ is at least 4. To this end we shall replace this polygonal path $q_1q_2\cdots q_{n+1}$ by a shorter one as follows. By Lemma \ref{lemma-simplexcover}, there is a subset $I$ of size at least 2 such that $0$ is in the convex hull of $\{q_i\}_{i\in I}$ and $\{x_j\}_{j\not\in I}$. Note that if no $x_j$'s participate, we are done by Theorem \ref{thm-about-Klengths}. We may thus assume that $|I|<n+1$.

In particular, there exists a convex combination $x$ of $\{x_j\}_{j\not\in I}$ and a convex combination $q$ of $\{q_i\}_{i\in I}$ such that $0 = (1-\lambda)q+\lambda x$ for some $\lambda \in (0,1)$.
 
We shall first consider the shorter trajectory, which passes only through $\{q_i\}_{i\in I}$ (in the same order as before). By the triangle inequality, it is not longer than the original trajectory. Then, consider the new trajectory passing through the new points $q_i' = (1-\lambda)q_i + \lambda x$. Note that $0$ is in the convex hull of $q_i'$. Further, note that $q_i'$ is a convex combination of two points which are on the facet of $S$ that is opposite to $x_i$, since $x$ is composed of $x_j$ with $j\not\in I$ and in particular $j\neq i$. Therefore, by convexity, we see that $q_i'$ belongs to the hyperplane supporting $K$ via $q_i$ and in particular $q_i' \not\in \mathop{\rm int}(K)$. The length of the new path is of course $(1-\lambda)$ times the length of the path via $q_i$, and at the same time, by Theorem \ref{thm-about-Klengths}, this length is at least 4, and the proof of Theorem~\ref{thm-about-Klengths-withoutconv} is now complete.  %we are done. 
\end{proof}

\begin{remark} {\rm 
It is not difficult to check that the equality case in Theorem~\ref{thm-about-Klengths-withoutconv} is exactly  the same as in Theorem \ref{thm-about-Klengths} above. }
\end{remark}

\section{Proof of the Main Results} \label{sec-proof-of-main-result}

It is well known that the Hofer--Zehnder capacity $c_{_{\rm HZ}}$ 
is continuous with respect to the Hausdorff metric on the class of convex domains 
(see e.g.,~\cite{MS}, Exercise 12.7). Hence, we can assume without loss of generality that $K \in {\cal K}_s({\mathbb R}_q^{n})$ is  smooth, and strictly convex.

We shall make use of the following simple lemma regarding $(K,T)$-billiard trajectories. 

\begin{lemma}\label{lem:0inconvnormals}
Let $\gamma:S^1\to \partial (K\times T)$ be a $(K,T)$-billiard trajectory and let $\gamma_q = \pi_q(\gamma(S^1))$ denote its projection to ${\mathbb R}_q^n$. Then 
$$
0\in \mathop{\rm Conv}\bigl( n_K\left(\gamma_q \cap \partial K\right) \bigr), 
$$
In other words, the origin lies in the convex hull of the set of outer normals at those points where the trajectory meets the boundary of $K$. 
\end{lemma}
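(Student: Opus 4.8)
The plan is to track the $p$-component of the trajectory and exploit the fact that it is a closed loop. Write $\gamma(t) = (q(t),p(t))$ for the two coordinate components, so that the set $\gamma_q$ appearing in the statement is $\{q(t) : t\in S^1\}$. The first step, read directly off the vector field ${\mathfrak X}$ in Definition~\ref{def-of-periodic-traj}, is the observation that the velocity of the $p$-component is always a nonnegative multiple of the outer normal $n_K(q(t))=\nabla g_K(q(t))$ at the current $q$-point, and vanishes whenever that point lies in the interior of $K$. Indeed, on the free-flight arcs $(q,p)\in{\rm int}(K)\times\partial T$ one has $\dot p=0$ (and there $q(t)\notin\partial K$); on the reflection arcs $(q,p)\in\partial K\times{\rm int}(T)$ one has $\dot p = d\,\nabla g_K(q(t)) = d\,n_K(q(t))$ with $d>0$; and at the bounce instants $t\in{\mathcal B}_\gamma$ the cone condition \eqref{eq-the-cone} forces the one-sided derivatives to satisfy $\dot p^{\pm}=\beta\,n_K(q(t))$ with $\beta\ge 0$. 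Thus in every regime $\dot p(t)=\mu(t)\,n_K(q(t))$ for a scalar $\mu(t)\ge 0$ supported on $\{t : q(t)\in\partial K\}$.

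Since $\gamma$ is closed, the net change of $p$ around $S^1$ is zero, so integrating the previous identity gives
\[ 0 = \oint_{S^1}\dot p(t)\,dt = \int_{S^1}\mu(t)\,n_K(q(t))\,dt. \]
Next I would verify the nontriviality $M:=\int_{S^1}\mu(t)\,dt>0$: if $\mu\equiv 0$ then $p$ is constant, so the trajectory moves only in $q$, and in both the free-flight and the $\beta=0$ gliding regimes the velocity $\dot q$ points in the single fixed direction $-\nabla g_T(p)$; hence $q(t)$ would be monotone along that direction and could not close up inside the bounded body $K$ unless $\gamma$ were constant, contradicting that $\gamma$ is a genuine closed billiard trajectory. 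Dividing the displayed identity by $M$ then exhibits the origin as the barycenter of the probability measure obtained by pushing $M^{-1}\mu(t)\,dt$ forward under the continuous Gauss map $q\mapsto n_K(q)$ (continuous since $K$ is smooth), a measure supported on the compact set $n_K(\gamma_q\cap\partial K)$. Because the convex hull of a compact set in $\RR^n$ is compact, the barycenter lies in it, giving $0\in{\rm Conv}\bigl(n_K(\gamma_q\cap\partial K)\bigr)$, as claimed. For a \emph{proper} trajectory this is simply the finite statement $\sum_i c_i\,n_K(q_i)=0$ with all $c_i>0$, obtained by summing the increments that $p$ undergoes at the successive reflection points $q_1,\dots,q_m$.

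I expect the only delicate point to be the passage from the vanishing integral to membership in the convex hull in the gliding case, where $\gamma_q\cap\partial K$ is a continuum rather than a finite set; this is precisely what the barycenter argument (or, equivalently, a Riemann-sum approximation together with compactness of $n_K(\gamma_q\cap\partial K)$ and of its convex hull) is designed to handle. Everything else is a direct reading of the defining vector field ${\mathfrak X}$ and the cone condition \eqref{eq-the-cone}.
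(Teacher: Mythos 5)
Your proof is correct and follows essentially the same route as the paper's: both rest on the observation that the velocity of the $p$-component is a nonnegative multiple of $n_K(q(t))$, so that closedness of the loop (zero net change in $p$) exhibits the origin as a nonnegative combination of outer normals at the boundary points. The only differences are in execution: the paper splits into the proper case (summing the momentum jumps $\lambda_i n_K(q_i)=p_{i-1}-p_i$, with a Lagrange-multiplier argument for $\lambda_i\ge 0$) and the gliding case (citing Proposition 2.2 of~\cite{AAO1} for $\tfrac{d}{dt}p=\lambda(t)n_K(q(t))$ with $\lambda>0$), whereas you treat both regimes uniformly via the vector field and the cone condition~$(\ref{eq-the-cone})$, and you additionally verify the nondegeneracy $\int\mu>0$, a point the paper leaves implicit.
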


\begin{proof}[{\bf Proof of Lemma~\ref{lem:0inconvnormals}}]
For a proper billiard trajectory, the reflection rule at $q_i$ connects the normal and the momenta before and after $q_i$ via:
$$
\lambda_i n_K(q_i) = p_{i-1} - p_i,
$$
for some nonnegative $\lambda_i$. This follows from the definition of $(K,T)$-billiard trajectories, and also can be easily seen from the variational characterization of the trajectory. Indeed, apply the Lagrange multipliers to optimizing $h_T(q_i-q_{i-1}) + h_T(q_{i+1}-q_{i}) $ %$\|q_i-q_{i-1}\|_K+\|q_i-q_{i-1}\|_K$ 
under the constraint $g_K(q_i)=1$. The multiplier $\lambda_i$ is nonnegative, since otherwise the trajectory would increase its length while moving $q_i$ in the direction of an interior point of the segment $[q_{i-1}, q_{i+1}]$, which is impossible by the triangle inequality. Summing up, and using the fact that the trajectory is closed, we obtain the required combination
$$
\sum_{i=1}^{m} \lambda_i n_K(q_i) = 0.
$$
For a gliding trajectory, it is known (see Proposition 2.2 in~\cite{AAO1}) that there exist an equation connecting the momentum and the outer normal of the form:
$$
\frac{d}{dt} p(t) = \lambda(t) n_K(q(t)),
$$
for some smooth positive function $\lambda : S^1 \rightarrow {\mathbb R}$. As in the case of a proper trajectory, this gives the required convex combination after integration.
\end{proof}

\begin{proof}[{\bf Proof of Theorem \ref{thm-about-capacities}}]

We shall use Theorem~\ref{thm-about-Klengths-withoutconv}, in order to show that any periodic $K^{\circ}$-billiard trajectory in $K$ has length (with respect to $h_{K^{\circ}} = \| \cdot \|_K$) at least 4, and that this bound is actually attained. Combining this with Theorem~\ref{Main-Theorem-From-AAO1} will prove Theorem~\ref{thm-about-capacities}.

The fact that $c_{_{\rm HZ}}(K\times K^{\circ})$ is at most $4$ follows from the easily verified fact that for any $q\in \partial K$, the path $[-q,q]$ is a $K^{\circ}$-billiard trajectory in $K$ (called a ``bouncing orbit''). This in turn  follows from the fact that for a strictly convex body $K$ one has  
\begin{equation} \label{formula-for-normal}
p = \frac{n_K(q)}{\|n_K(q)\|_{K^{\circ}}} \in \partial K^{\circ} \ \  {\rm if \ and \ only \ if \ \ } q = \frac{n_{K^{\circ}}(p)}{\|n_{K^{\circ}}(p)\|_K} \in \partial K.
\end{equation}
%then 
%$$ 
%x = \frac{n_{K^{\circ}}(n)}{\|n_{K^{\circ}}(n)\|_K}.
%$$ 
The lengths % (with respect to $K$, which is $(K^{\circ})^{\circ}$) 
of these bouncing orbits are exactly $4$, and so $c_{_{\rm HZ}}(K\times K^{\circ}) \le 4$. 

To show that any $K^{\circ}$-billiard trajectory in $K$ has length at least $4$ (again, with respect to $\| \cdot \|_K$), we consider such a trajectory $\gamma_q=\pi_q(\gamma(S^1))$ and use Lemma \ref{lem:0inconvnormals} together with Carath\'eodory's theorem to find $n+1$ points $q_j\in \gamma_q \cap \partial K$, which satisfy $0\in {\rm Conv}(\{n_K(q_i)\}_{i=1}^{n+1})$. By the triangle inequality, the length of $\gamma_q$ %(with respect to $K$) 
is at least the length of the closed polygonal line $(q_1\cdots q_{n+1})$. By Theorem \ref{thm-about-Klengths-withoutconv}, this length is at least $4$, and the proof of Theorem \ref{thm-about-capacities} is complete. 
\end{proof}

\begin{proof}[{\bf Proof of Theorem \ref{thm-about-strictly-convex-billiards}}]
Note that in the proof of Theorem~\ref{thm-about-capacities} above we already showed that the $\| \cdot \|_K$-length of any periodic $K^{\circ}$-billiard trajectory in $K$ is at least 4, and this bound is clearly attained on $2$-bouncing orbits.  Moreover, as remarked after its proof, the equality conditions in Theorem~\ref{thm-about-Klengths-withoutconv} in the strictly convex
case are precisely those of Theorem~\ref{thm-about-Klengths}. Namely, equality is attained if and only if the trajectory is composed of two antipodal points in $\partial K$, which
is precisely $2$-bouncing orbits.
\end{proof}

\begin{remark}  \label{rem-about-K-T} {\rm
It follows from Theorem 1.7 that for any centrally symmetric convex bodies $K\subset \RR^n_q,$ and $T\subset \RR^n_p$, one has 
\begin{equation} \label{eq-for-cap-of-K-T} c_{_{\rm HZ}}(K \times T)   = \overline c(K \times T) =  4 \inrad_{T^{\circ}}(K), \end{equation}
%\[ c_{HZ}(K \times T) = 4 \inrad_{T^\circ}(K),\]
where $\inrad_A(B) = \max\{ r \, | \, rA \subset B\}$. As before, since the above expressions are continuous with respect to the Hausdorff metric, one may assume that both $K$ and $T$ are smooth. % and strictly convex. 
To establish~$(\ref{eq-for-cap-of-K-T})$, we first use 
%In one direction, equality~$(\ref{eq-for-cap-of-K-T})$ follows from 
Theorem~\ref{thm-about-capacities}, the monotonicity property of symplectic capacities, and the fact that  $\overline c$ is the largest possible symplectic capacity: % and  from the properties of symplectic capacities. 
 if $rT^{\circ} \subset K$ then $T\supset rK^{\circ}$, and hence
\[  \overline c(K \times T) \geq   c_{_{\rm HZ}}(K\times T) \ge c_{_{\rm HZ}}(K \times rK^{\circ}) = 4r.\]
Next, we use the conformality  property of symplectic capacities to assume without loss of generality that $\inrad_{T^{\circ}}(K) = 1$. 
Let $q_0 \in \partial K \subset {\mathbb R}^n_q$ be a tangency point of $K$ and $T^{\circ}$, and set $p_0 \in \partial K^{\circ} \subset {\mathbb R}^n_p$  to be the point defined as in~$(\ref{formula-for-normal})$ above.
Note that $p_0$ is also a tangency point of $K^{\circ}$ and $T$. 
Moreover, as in the proof of Theorem 1.8, the $2$-bouncing trajectory $[-q_0,q_0]$ is not only a $K^{\circ}$-billiard trajectory in $K$, but also a $T$-billiard trajectory in $K$, as $K$ and $T^{\circ}$ share the same normals at the points $\pm q_0$, and 
 $K^{\circ}$ and $T$  share the same normals at  $\pm p_0$.  Now, by definition, the cylinder 
$$
\widetilde Z = \{(p, q) \in\mathbb R^{2n}: |q_0(p)| \le 1\ \text{and}\ |p_0(q)| \le 1\},
$$
contains the body $K \times T$. Moreover,  its Hofer-Zehnder capacity satisfies $c_{_{\rm HZ}}(\widetilde Z) \leq 4$ since the above  $2$-bouncing trajectory is a  closed characteristic on the boundary $\partial \widetilde Z$, of action $4$.
Finally, since $q_0(p_0) = p_0(q_0)=1$, the interior of the cylinder $\widetilde Z$ is symplectomorphic to the interior of the standard symplectic cylinder $B^2\left(\sqrt{\frac  4 {\pi}}\right) \times {\mathbb C}^{n-1}$, over which all symplectic capacities coincide.
Thus we conclude that $\overline c(K \times T) \leq \overline c(\widetilde Z) = c_{_{\rm HZ}}(\widetilde Z) \leq 4$. Thus we have established~$(\ref{eq-for-cap-of-K-T})$.
} 
\end{remark}

\bigskip

\noindent Shiri Artstein-Avidan,\\
\noindent School of Mathematical Science, Tel Aviv University, Tel Aviv, Israel.\\
\noindent {\it e-mail}: shiri@post.tau.ac.il

\bigskip

\noindent Roman Karasev, Dept. of Mathematics, Moscow Institute of Physics and Technology, Institutskiy per. 9, Dolgoprudny, Russia 141700\\
\noindent Roman Karasev, Institute for Information Transmission Problems RAS, Bolshoy Karetny per. 19, Moscow, Russia 127994\\
\noindent Roman Karasev, Laboratory of Discrete and Computational Geometry, Yaroslavl' State University, Sovetskaya st. 14, Yaroslavl', Russia 150000\\
\noindent {\it e-mail}: r\textunderscore n\textunderscore karasev@mail.ru

\bigskip

\noindent Yaron Ostrover,\\
\noindent School of Mathematical Science, Tel Aviv University, Tel Aviv, Israel.\\
\noindent {\it e-mail}: ostrover@post.tau.ac.il


\begin{thebibliography}{}
\bibitem{AAMO} Artstein-Avidan, S., Milman, V., Ostrover, Y. {\it The M-ellipsoid,
symplectic capacities and volume}, Comment. Math.
Helv. 83, (2008) no.2, 359--369.


\bibitem{AAO1} Artstein-Avidan, S., Ostrover Y. {\it  Bounds for Minkowski billiard trajectories in convex bodies,} 
Intern. Math. Res. Not. (IMRN) (2012) doi:10.1093/imrn/rns216.


\bibitem{Bl} Blaschke, W. {\it \"Uber affine Geometrie VII: Neue Extremeigenschaften
von Ellipse und Ellipsoid}, Ber. Verh. S\"achs. Akad. Wiss. Leipzig,
Math.-Phys. Kl {\bf 69} (1917) 306--318, Ges. Werke {\bf 3} 246--258.

\bibitem{BM} Bourgain, J., Milman, V. D. {\it New volume ratio properties for convex
symmetric bodies in ${\mathbb R}^n$,} Invent. Math. 88 (1987), no.
2, 319--340.

\bibitem{CHLS} Cieliebak, K., Hofer, H., Latschev, J., Schlenk
F. {\it Quantitative symplectic geometry,} in: Dynamics, ergodic theory,
and geometry, 1--44, Math. Sci. Res. Inst. Publ., 54, Cambridge Univ.
Press, Cambridge, 2007.

\bibitem{EH} Ekeland, I., Hofer, H. {\it Symplectic topology
and Hamiltonian dynamics I $\&$ II}, Math. Z. 200 (1989), no. 3, 355--378, and Math. Z.  203 (1990), no.4, 553--567.

\bibitem{FGS} Frauenfelder, U., Ginzburg, V., Schlenk, F. {\it Energy capacity
inequalities via an action selector},  Geometry, spectral theory,
groups, and dynamics, 129--152, Contemp. Math., 387, Amer. Math.
Soc., Providence, RI, 2005.


\bibitem{GPV} Giannopoulos, A., Paouris, G., Vritsiou, B. {\it The isotropic position and the reverse Santal\'o inequality,}
to appear in Israel J. Math.

\bibitem{GMR} Gordon, Y., Meyer,  M.,  Reisner, S. {\it Zonoids with minimal
volume product -- a new proof,} Proc. Amer. Math. Soc.
104 (1988), no. 1, 273--276.

\bibitem{G} Gromov, M. {\it Pseudoholomorphic curves in symplectic manifolds,}
Invent. Math. {\bf 82} (1985), no. 2, 307--347.


\bibitem{GT} Gutkin, E., Tabachnikov, S. {\it Billiards in Finsler and Minkowski geometries,}
J. Geom. Phys. 40 (2002), no. 3--4, 277--301.


\bibitem{Her} Hermann, D. {\it Non-equivalence of symplectic capacities for open
sets with restricted contact type boundary}. Pr\'epublication
d'Orsay num\'ero 32 (29/4/1998).

\bibitem{Ho} Hofer, H. {\it Symplectic capacities.} in: Geometry of
low-dimensional manifolds, 2 (Durham, 1989), 15--34, London Math.
Soc. Lect. Note Ser., {\bf 151}, Cambridge Univ. Press, 1990.

\bibitem{H1} Hofer, H. {\it On the topological properties of symplectic
maps,} Proc. Roy. Soc. Edinburgh Sect. A 115, 25--38 (1990).

\bibitem{HZ} Hofer, H., Zehnder, E. {\it Symplectic Invariants
and Hamiltonian Dynamics}, Birkh\"auser, Basel (1994).

\bibitem{Hut} Hutchings, M. {\it Quantitative embedded contact homology,} J. Differential Geom. 88 (2011), no. 2, 231--266.


\bibitem {Ku} Kuperberg, G. {\it From the Mahler conjecture to Gauss linking
integrals,} Geom. Funct. Anal., 18, no. 3, (2008),
870--892.

%\bibitem{L} Lalonde, F. {\it Energy and capacities in symplectic
%topology,} Geometric topology (Athens, GA, 1993), 328-374, AMS/IP
%Stud. Adv. Math., {\bf 2.1}, Amer. Math. Soc., Providence, RI, 1997.

%\bibitem{LaMc} Lalonde, F., McDuff, D. {\it The geometry of symplectic
%energy,} Ann. of  Math. 141, 349-371 (1995).


\bibitem{Ma} Mahler, K. {\it Ein \"Ubertragungsprinzip f\"ur konvexe
Korper,} Casopis Pyest. Mat. Fys. 68, (1939), 93--102.

\bibitem{MS} McDuff, D., Salamon, D. {\it Introduction to Symplectic Topology,}
Oxford Mathematical Monographs. The Clarendon Press, Oxford
University Press, New York, 1998.

\bibitem{ME1} Meyer, M. {\it Une caract\'erisation volumique de certains espaces
norm\'es de dimension finie,} Israel J. Math. 55 (1986), no. 3,
317--326.

%\bibitem{MP} Meyer, M., Pajor, A. {\it On the Blaschke-Santal\'o
%inequality,} Arch. Math. 55 (1990), 82--93

\bibitem{Naz} Nazarov, F. {\it The H\"{o}rmander proof of the Bourgain-Milman theorem}, in: Geometric aspects of functional analysis, 335--343, 
Lecture Notes in Math., 2050, Springer, Heidelberg, 2012. 

\bibitem{NPRZ} Nazarov, F., Petrov, F., Ryabogin, D. and Zvavitch,
A. {\it A remark on the Mahler conjecture: local minimality of the unit cube,} Duke Math. J. 154 (2010), no. 3, 419--430. 

\bibitem{Oh} Oh, Y-G. {\it Chain level Floer theory and Hofer's geometry of the Hamiltonian diffeomorphism
group,} Asian J. Math. 6 (2002), no. 4, 579--624.

\bibitem{R1} Reisner, S. {\it Zonoids with minimal volume product,} Math.
Z. 192 (1986), no. 3, 339--346.

\bibitem{R2} Reisner, S. {\it Minimal volume-product in Banach spaces with a
1-unconditional basis,} J. London Math. Soc. 36 (1987),  no.1, 126--136.

\bibitem{SR} Saint Raymond, J. {\it Sur le volume des corps convexes
sym\'etriques,} Initiation Seminar on Analysis: G. Choquet--M.
Rogalski--J. Saint-Raymond, 20th Year: 1980/1981, Exp. No. 11, 25
pp., Publ. Math. Univ. Pierre et Marie Curie, 46, Univ. Paris VI,
Paris, 1981.

\bibitem{Sa} Santal\'o, L.A. {\it Un invariante afin para los cuerpos convexos de espacio de $n$ dimensiones,}
Portugal. Math {\bf 8} (1949) 155--161.



\bibitem{V2} Viterbo, C. {\it Symplectic topology as the geometry of
generating functions,} Math. Ann. 292, no. 4, 685--710 (1992).

\bibitem{V} Viterbo, C.
{\it Metric and isoperimetric problems in symplectic geometry,} J.
Amer. Math. Soc. 13 (2000), no. 2, 411--431.

\end{thebibliography}
\end{document}